\newtheorem{theorem}{Theorem}
\newtheorem{lemma}{Lemma}
\newtheorem{proposition}{Proposition}
\newtheorem{corollary}{Corollary}
\theoremstyle{definition}
\theoremstyle{remark}
\newtheorem{remark}{Remark}
\DeclareMathOperator{\Cyc}{Cyc}
\DeclareMathOperator{\Cov}{Cov}
\providecommand{\Exc}[2]{\mathbb{E}\left[#1\middle| #2\right]}
\providecommand{\card}[1]{\texttt{\#}#1}
\providecommand{\inorm}[1]{\lVert#1\rVert}
\newcommand{\Ex}{\mathbb{E}}
\providecommand{\Exc}[2]{\mathbb E\left[\right]#1\middle| #2\right]}
\providecommand{\abs}[1]{\lvert#1\rvert}
\providecommand{\risfac}[2]{#1^{\overline{#2}}} 
\DeclareRobustCommand{\stirling}{\genfrac{[}{]}{0pt}{}}
\newcommand{\conv}{\mathop{\scalebox{1.5}{\raisebox{-0.2ex}{$\ast$}}}} 
\providecommand{\Prob}[1]{\mathbb{P}\left\{#1\right\}}
\providecommand{\indicator}[1]{{\mathbf 1}_{\{#1\}}}
\providecommand{\keywords}[1]
{
	\small	
	\textbf{\textit{Keywords---}} #1
}
\title{Occupation times and areas derived from random sampling}
\author[1]{Frank Aurzada}
\author[2]{Leif D\"oring}
\author[2]{Helmut H.~Pitters}
\affil[1]{Department of Mathematics, Technical University of Darmstadt}
\affil[2]{Mathematics Institute, University of Mannheim}
\begin{document}

\maketitle

\begin{abstract}
We consider the occupation area of spherical (fractional) Brownian motion, i.e.\ the area where the process is positive, and show that it is uniformly distributed. For the proof, we introduce a new simple combinatorial view on occupation times of stochastic processes that turns out to be surprisingly effective. A sampling method is used to relate the moments of occupation times to persistence probabilities of random walks that again relate to combinatorial factors in the moments of beta distributions. Our approach also yields a new and completely elementary proof of L\'evy's second arcsine law for Brownian motion. Further, combined with Spitzer's formula and the use of Bell polynomials,  we give a characterisation of the distribution of the occupation times for all L\'evy processes.
\end{abstract}

\bigskip
\keywords{Bell polynomials;  fluctuation theory for random walks; L\'evy process; occupation time; spherical fractional Brownian motion}

\section{Introduction and main results}

Consider a measure space $(I, \mathcal I, \alpha)$, where $\alpha$ is a finite measure with total mass $|\alpha|=\alpha(I)$, and a stochastic process $X=\{ X_t, t\in I\}$ with index set $I$ whose state space $\mathscr X$ is endowed with some sigma algebra $\mathcal X$. We do not assume $I$ to be an ordered set. For a real-valued, non-negative, measurable function $f:\mathscr X\to [0, \infty)$ consider the path integral
$$
  \int_I f(X_s)\alpha(ds).
$$

Path integrals for diverse stochastic processes have a rich history in several areas of probability theory. In the present article, we deal with occupation times $\int_I \indicator{X_s\in S}\alpha(ds)$ for some measurable set $S$.  For $I=[0,t]$, $\alpha$ the Lebesgue measure, and $S$ measurable, this is the portion of time that the process spends in the set $S$. Most classically, the occupation time of the non-negative half-line $S=[0,\infty)$ during $[0, 1]$ by a Brownian motion is well-known to be arcsine distributed, i.e.\ it has the density $\pi^{-1} (x(1-x))^{-1/2}$ on $(0,1)$. This result goes back to Paul L\'evy~\cite{Levy1939} and is sometimes referred to as the second arcsine law for Brownian motion, cf.~\cite{MoertersPeres2010}. Since L\'evy's seminal work, many proofs for this result have been found (e.g.~Kac's derivation via the Feynman-Kac formula as expounded in~\cite[application of Theorem 7.43]{MoertersPeres2010}, or via approximation by (simple) random walks, cf.~\cite[Theorem 5.28]{MoertersPeres2010}). Further, various generalizations to other processes have been considered (see for instance~\cite{KallianpurRobbins1953, DarlingKac1957, Bingham1971, FellerVol1, GetoorSharpe1994, Knight1996, BinghamDoney1988, KhoshnevisanPemantle2004, ErnstShepp2017, Mountford1990, MeyreWerner1995}).\smallskip


While the one-dimensional stochastic process setting is well-understood, many open problems remain for multi-dimensional processes and processes with general index sets. Most prominently, characterising the distribution of occupation times of planar (and higher dimensional) Brownian motion (random walks) in cones are open problems to this day (except for cases that may trivially be reduced to one-dimensional problems). The major focus of this paper is on random fields, i.e.\ on processes with multidimensional index sets. Here, the Brownian sheet is a natural object to consider, and \cite{KhoshnevisanPemantle2004} derives asymptotic bounds, but the exact distribution of the occupation `area' of the Brownian sheet remains unknown. For the Brownian pillow we refer to \cite{Hashorva}. In this paper we compute the distribution of the occupation area for the fractional generalisation of L\'evy's spherical Brownian motion. This is our main result. \smallskip 

To motivate our approach let us recall some attempts towards the occupation times of planar Brownian motion using moments. Note that occupation times are bounded random variables and as such are uniquely determined by their moments. As a specific example consider the time that planar Brownian motion spends in some fixed cone $C$. The problem to characterise the distribution of this time was put forth in \cite{BinghamDoney1988} and is still open. The authors were able to derive the first three moments of this occupation time if $C$ is taken to be a quadrant. Motivated by this work,~\cite{ErnstShepp2017} studied the time that planar Brownian motion spends in the `hourglass', i.e.\ the union of the first and third quadrant, and rephrased this problem in the language of Kontorovich-Lebedev transforms.  Desbois~\cite{Desbois2007} generalized the quadrant problem to wedges with apex at the origin and some angle $\theta>0$.  Employing methods from physics, the author computed the first three moments in the case of a wedge with angle $\theta$, the fourth moment in the quadrant case ($\theta=\pi/2$), and derived a general formula for second moments in high-dimensional orthants. We follow these research efforts and attack occupation time distributions through their integer moments, introducing a simple sampling method.\smallskip

Suppose that we were to `guess' the proportion of time that the process $X$ spends in some set $S$ during $[0, t]$, and to this end we were allowed to sample $X$ at $m$ instances chosen according to our liking. It seems rather natural to choose the times $U_1, \ldots, U_m$ independently (and independent of $X$) and uniformly at random in $[0, t],$ and to take the empirical probability $\card \{1\leq k\leq m\colon X_{U_k}\in S\}/m$ as an estimator of said proportion. In fact, it turns out that the probability that $X$ is in $S$ at all times $U_1, \ldots, U_m$ agrees with the $m$-th moment of the occupation time of $S$ (up to the factor $t^m$), a generalization of which we will see in Proposition~\ref{prop:sampling}. Sampling a stochastic process at random times is by no means a new idea, and has been employed in various other contexts. For instance, the random tree may be constructed from broken lines derived from Brownian excursion sampled at independent uniform times~\cite{Aldous1993, LeGall1993}. In~\cite{Pitman1999} the author studied Brownian motion, bridge, excursion and meander by sampling at i.i.d.~uniform times, and the convex hull of multidimensional Brownian motion was studied in~\cite{Eldan2014} by sampling at the points of an independent Poisson process.\smallskip

%
%

A surprising consequence of the computation of moments by means of sampling at random times is a completely elementary proof for the arcsine law of the Brownian motion, the uniform distribution of the occupation time of L\'evy bridges, and also a new characterisation of the occupation times for all L\'evy processes. Our approach combines occupation time moments with random walk probabilities and elementary combinatorics. The use of combinatorics is not surprising as beta distributions often appear as occupation time distributions have explicit moment expressions involving elementary combinatorial factors. For example, the $m$-th moments of the arcsine distribution are $2^{-2m}\binom{2m}{m}$, combinatorial factors that appear in many combinatorial problems, in particular in persistence probabilities of random walks. This suggests to ask if the $m$-th moments of occupation times are inherently related to combinatorial terms. Our answer is yes. The main insight of this article is to realise that the following simple sampling formula is a surprisingly effective link to relate occupation times, random walks, and, depending on the situation, beta distributions.

\begin{proposition}
\label{prop:sampling}
Consider a stochastic process $(X_t)_{t\in I}$ indexed by a measure space $(I, \mathcal I, \alpha)$ that attains values in a measurable space $(\mathscr X,\mathcal X)$. Let $S\in\mathcal X$ and $m\in\mathbb N$.  Then 
   \begin{align} \label{eqn:samplingindicator}
      \Ex\left[\left(\int_I \indicator{X_t\in S}\alpha(dt)\right)^m\right]=\abs{\alpha}^m\Prob{ X_{U_1}\in S, \ldots, X_{U_m}\in S},\quad m\in\mathbb N,
    \end{align}
  where $U_1, U_2, \ldots$ is an i.i.d.~sequence independent of $X$ such that $U_1$ has distribution $\alpha/\abs\alpha$. 
  \end{proposition}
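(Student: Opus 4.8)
The plan is to expand the $m$-th power of the occupation time as an $m$-fold integral over the product space $I^m$ and then interchange expectation with integration by Tonelli's theorem. Throughout one works under the implicit standing hypothesis that the map $(t,\omega)\mapsto\indicator{X_t(\omega)\in S}$ is jointly measurable (with respect to $\mathcal I$ tensored with the sigma algebra of the underlying probability space), which is precisely what is needed for the path integral $\int_I\indicator{X_t\in S}\,\alpha(dt)$ to be a well-defined random variable in the first place.

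First I would observe that for every fixed realisation of the path, multilinearity of the integral gives
\[
   \left(\int_I \indicator{X_t\in S}\,\alpha(dt)\right)^m
   =\int_{I^m}\prod_{k=1}^m\indicator{X_{t_k}\in S}\,\alpha^{\otimes m}(dt_1,\ldots,dt_m),
\]
where $\alpha^{\otimes m}$ is the $m$-fold product measure. Taking expectations on both sides and noting that the integrand is nonnegative and jointly measurable in $(t_1,\ldots,t_m,\omega)$, Tonelli's theorem—applicable because $\alpha$, hence $\alpha^{\otimes m}$, is finite—allows pulling the expectation inside the integral:
\[
   \Ex\left[\left(\int_I \indicator{X_t\in S}\,\alpha(dt)\right)^m\right]
   =\int_{I^m}\Prob{X_{t_1}\in S,\ldots,X_{t_m}\in S}\,\alpha^{\otimes m}(dt_1,\ldots,dt_m).
\]
(The same theorem also guarantees that $(t_1,\ldots,t_m)\mapsto\Prob{X_{t_1}\in S,\ldots,X_{t_m}\in S}$ is measurable, so the right-hand side makes sense.)

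Next I would factor the finite measure as $\alpha=\abs{\alpha}\cdot(\alpha/\abs{\alpha})$, so that $\alpha^{\otimes m}=\abs{\alpha}^m\,(\alpha/\abs{\alpha})^{\otimes m}$, and pull the constant $\abs{\alpha}^m$ out of the integral. It then remains to identify
\[
   \int_{I^m}\Prob{X_{t_1}\in S,\ldots,X_{t_m}\in S}\,(\alpha/\abs{\alpha})^{\otimes m}(dt_1,\ldots,dt_m)
   =\Prob{X_{U_1}\in S,\ldots,X_{U_m}\in S}.
\]
This is exactly the tower property: since $U_1,U_2,\ldots$ are i.i.d.\ with $U_1\sim\alpha/\abs{\alpha}$, the vector $(U_1,\ldots,U_m)$ has joint law $(\alpha/\abs{\alpha})^{\otimes m}$; and since $(U_k)$ is independent of $X$, conditioning on $(U_1,\ldots,U_m)=(t_1,\ldots,t_m)$ yields $\Prob{X_{U_1}\in S,\ldots,X_{U_m}\in S\mid (U_k)_k=(t_k)_k}=\Prob{X_{t_1}\in S,\ldots,X_{t_m}\in S}$, and integrating this against the joint law of $(U_1,\ldots,U_m)$ gives the claim. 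Combining the three displays proves \eqref{eqn:samplingindicator}.

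There is no real obstacle beyond bookkeeping here; the one point I would flag explicitly is the joint-measurability hypothesis needed both to make the path integral meaningful and to license the Tonelli interchanges. Granting that, every step is a routine application of Tonelli for nonnegative integrands together with the independence of the sampling times from $X$.
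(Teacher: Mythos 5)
Your proof is correct and is essentially the paper's argument run in the opposite direction: the paper starts from $\Prob{X_{U_1}\in S,\ldots,X_{U_m}\in S}=\Ex[f(X_{U_1})\cdots f(X_{U_m})]$ and integrates out the $U_i$ using their independence from $X$, arriving at the product of $m$ identical $\alpha$-integrals, whereas you expand the $m$-th power into an $m$-fold integral and apply Tonelli; both hinge on the same Fubini--Tonelli interchange and the independence of the sampling times. Your explicit attention to joint measurability is a reasonable (and implicitly assumed) refinement, not a different method.
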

  \begin{proof}
      Set $f(x):=\indicator{x\in S}$. Re-writing the expectation w.r.t.\ to the distribution $\alpha/|\alpha|$ (independent of $X$) as integrals, we obtain
      $$
      \Ex\big[f(X_{U_1})\cdots f(X_{U_m})\big] =  \Ex\left[ \int_I\cdots \int_I f(X_{u_1}) \cdots f(X_{u_m}) \frac{\alpha(d u_1)}{|\alpha|} \cdots \frac{\alpha(d u_m)}{|\alpha|}\right].
      $$
      Multiplying by $|\alpha|^m$, noticing that all the integrals are identical, and inserting  $f(x):=\indicator{x\in S}$ shows the claim.
  \end{proof}


In order to discuss the use of this result, let us consider the example $I=[0,t]$, $\alpha$ the Lebesgue measure, and $S=[0,\infty)$. Then Proposition~\ref{prop:sampling} shows that the occupation time of continuous-time processes $(X_t)$ can be characterised through the persistence probabilities $\Prob{X_{U_1}\geq 0,...,X_{U_m}\geq 0}$. In many situations these persistence probabilities may be reduced to persistence probabilities $\Prob{S_1\geq 0,...,S_m\geq 0}$ for a well-understood discrete-time process $(S_n)$. For example for random walks, there is a vast literature going back to seminal works of Spitzer \cite{Spitzer1956} and Sparre Andersen \cite{Andersen1953b}, see also the exposition in~\cite[Section 1.3]{KabluchkoVysotskyZaporozhets2019} for more recent results, where such probabilities were computed under different assumptions on the set $S$. Little suprisingly, the moments of arcsine distributions appear naturally in persistence probabilities. While there is a long tradition of deriving arcsine laws for continuous-time processes from discrete-time processes using Donsker-type limiting arguments, the simple connection between moments of occupation times and persistence probabilities seems to be new.

\begin{remark}
The sampling approach also shows that the $m$-th moment of the occupation time of $d$-dimensional Brownian motion in some cone $C$ is equal to the probability that a $d$-dimensional random walk stays in the cone $C$ up to time $m.$ The exit time from a cone of a multi-dimensional random walk has received great interest in mathematical research (cf.~e.g.~\cite{GarbitRaschel2016}), not least because this quantity has connections to many areas such as representation theory~\cite{Biane1991, Biane1992}, conditioned random walks~\cite{Biane1991, Biane1992}, random matrices~\cite{Dyson1962}, non-colliding random walks~\cite{DenisovWachtel2010, EichelsbacherKoenig2008}, and enumerative combinatorics~\cite{Bousquet-MellouMishna2010, FayolleRaschel2012, JohnsonMishnaYeats2018}. We leave for future research whether this direct link between continuous-time occupation times and discrete-time exit probabilities may help to solve open problems for the planar and multidimensional Brownian motion. 
\end{remark}

\smallskip
\textbf{Organisation of the article:} In the following sections we illustrate the power of this simple approach. The paper is structured as follows. In Section~\ref{sec:arcsine} we  give a very simple proof of the second arcsine law of Brownian motion. In Section~\ref{sec:sfBM} we discuss the main result of this paper, i.e.\ we determine the distribution of the occupation `area' of L\'evy's Brownian motion on the sphere. In Section~\ref{sec:levysection} we characterise all occupation times of one-dimensional L\'evy processes using combinatorial expressions. The proofs are given in Section~\ref{sec:proofs}.

\subsection{An elementary proof of L\'evy's arcsine law} \label{sec:arcsine}

As a first illustration of our line of attack we give a new, very elementary proof of L\'evy's second arcsine law for Brownian motion.
\begin{theorem}[L\'evy~\cite{Levy1939}] \label{thm:levysfirstarcsinelaw}
    If $B$ is a standard Brownian motion, then $t^{-1}\int_0^t \indicator{B_s>0}ds$ is arcsine distributed.
\end{theorem}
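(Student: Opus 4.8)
The plan is to use Proposition~\ref{prop:sampling} with $I=[0,t]$, $\alpha$ Lebesgue measure (so $|\alpha|=t$), and $S=(0,\infty)$, which reduces the computation of the moments $\Ex[(t^{-1}\int_0^t\indicator{B_s>0}ds)^m]$ to the persistence-type probabilities $\Prob{B_{U_1}>0,\ldots,B_{U_m}>0}$, where $U_1,\ldots,U_m$ are i.i.d.\ uniform on $[0,t]$ and independent of $B$. Since occupation times are bounded, they are determined by their moments, so it suffices to show these probabilities equal $2^{-2m}\binom{2m}{m}$, the $m$-th moments of the arcsine law. First I would use the scaling invariance of Brownian motion to reduce to $t=1$.

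The key step is to discretise. Order the sample times as $U_{(1)}<\cdots<U_{(m)}$ and set $\Delta_k = B_{U_{(k)}} - B_{U_{(k-1)}}$ (with $U_{(0)}=0$, $B_0=0$). Conditionally on the ordered times, the increments $\Delta_1,\ldots,\Delta_m$ are independent centred Gaussians; what matters is that $B_{U_{(k)}}=\Delta_1+\cdots+\Delta_k$ is a sum of independent symmetric (hence exchangeable-in-sign) random variables. The event $\{B_{U_1}>0,\ldots,B_{U_m}>0\}$ is exactly $\{B_{U_{(1)}}>0,\ldots,B_{U_{(m)}}>0\}$, i.e.\ the partial sums $S_k:=\Delta_1+\cdots+\Delta_k$ are all strictly positive. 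Now invoke the Sparre Andersen / Spitzer combinatorial identity: for any sequence of exchangeable random variables whose partial sums almost surely avoid zero (here true a.s.\ for fixed distinct times since the $\Delta_k$ are continuous), the probability that all $m$ partial sums are positive equals $\binom{2m}{m}2^{-2m}$. This identity is purely combinatorial — it follows from the cycle lemma applied to the $m!$ orderings of the increments — and crucially does not depend on the joint law of the ordered gaps, so the conditioning on $U_{(1)},\ldots,U_{(m)}$ disappears upon taking expectations. Hence $\Prob{B_{U_1}>0,\ldots,B_{U_m}>0} = 2^{-2m}\binom{2m}{m}$ for every $m$.

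Finally, I would match moments: the arcsine density $\pi^{-1}(x(1-x))^{-1/2}$ on $(0,1)$ has $m$-th moment $\int_0^1 x^m \pi^{-1}(x(1-x))^{-1/2}\,dx = 2^{-2m}\binom{2m}{m}$, a Beta$(1/2,1/2)$ computation. Since both the occupation time and the arcsine law are supported on $[0,1]$ and share all moments, they have the same distribution, proving Theorem~\ref{thm:levysfirstarcsinelaw}.

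The main obstacle is making the reduction to the Sparre Andersen identity airtight: one must argue that for fixed distinct sampling times the Gaussian partial sums are a.s.\ nonzero (clear) and, more importantly, that the positivity probability genuinely depends only on the \emph{signs} of permuted increment-sums in a way that is governed by the combinatorial lemma rather than by the Gaussian distribution itself. The cleanest route is to state the Sparre Andersen lemma in the exchangeable form $\Prob{S_1>0,\ldots,S_m>0}=\binom{2m}{m}2^{-2m}$ valid whenever the increments are exchangeable and the walk avoids $0$, and then simply apply it conditionally on the order statistics of $(U_1,\ldots,U_m)$. Everything else — the scaling reduction, the moment computation for the arcsine law, and the moment-determinacy conclusion — is routine.
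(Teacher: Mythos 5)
Your overall strategy (sampling formula, reduction to a discrete persistence probability, Sparre Andersen universality, moment matching) is the same as the paper's, but the key step is flawed as written. You condition on the order statistics $U_{(1)}<\cdots<U_{(m)}$ and claim that the combinatorial identity $\Prob{S_1>0,\ldots,S_m>0}=2^{-2m}\binom{2m}{m}$ applies to the conditional increments $\Delta_k=B_{U_{(k)}}-B_{U_{(k-1)}}$ because it ``does not depend on the joint law of the ordered gaps.'' It does. Conditionally on the gaps, the $\Delta_k$ are independent centred Gaussians with \emph{different} variances $U_{(k)}-U_{(k-1)}$; they are symmetric but not exchangeable, and the universality fails for independent symmetric non-identically distributed increments. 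Concretely, for $m=2$ with variances $g_1\gg g_2$ one gets $\Prob{\Delta_1>0,\Delta_1+\Delta_2>0\mid g}\approx \tfrac12$, while for $g_1\ll g_2$ one gets $\approx\tfrac14$; only the average over the (exchangeable, Dirichlet-distributed) gaps equals $\tfrac38=2^{-4}\binom{4}{2}$. A second, related issue is that you state the Sparre Andersen lemma for ``exchangeable increments whose partial sums avoid zero''; without a symmetry hypothesis this is false (i.i.d.\ exponential increments are exchangeable, avoid zero, and give persistence probability $1$). The version you need is invariance of $(\Delta_1,\ldots,\Delta_m)$ under the full group of signed permutations, which in your setting holds only \emph{unconditionally}: it follows from exchangeability of the first $m$ Dirichlet gaps combined with the conditional independence and symmetry of the Gaussian increments. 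With that correction your route does close, but it requires proving (or precisely citing) the signed-permutation form of the cycle lemma.

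For comparison, the paper sidesteps this entirely: it multiplies the ordered uniforms by an independent $\Gamma(m+1)$ variable $T_{m+1}$, so that $(T_{m+1}U_{m:1},\ldots,T_{m+1}U_{m:m})=_d(T_1,\ldots,T_m)$ are Poisson arrival times, and Brownian scaling turns $B_{T_{m+1}U_{m:k}}$ into a genuine random walk with i.i.d.\ symmetric (Laplace) increments. The standard i.i.d.\ argmax decomposition, which uses independence and stationarity of increments, then gives $2^{-2m}\binom{2m}{m}$ with a self-contained three-line recursion. If you want to keep your conditioning-free-of-randomization structure, you must either prove the exchangeable-and-sign-invariant Sparre Andersen identity yourself or adopt the paper's Gamma trick; as it stands, the step where the conditioning ``disappears'' is exactly where the argument breaks.
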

In contrast to other proofs of the second arcsine law of Brownian motion, our proof is completely elementary and in particular does not require any limiting procedure nor does it employ analytic computations or excursion theory, as L\'evy's original proof. At first sight our argument might resemble proofs that approximate Brownian motion using discrete-time random walks. However, instead, we use an entirely different connection between Brownian motion and the so-called Laplace random walk. Instead of discretising $(B_t)$ and studying the same problem for random walks, the sampling method relates the moments of the occupation time of continuous Brownian motion to discrete persistence probabilities.

\begin{proof}[A simple proof of Theorem~\ref{thm:levysfirstarcsinelaw}]
    W.l.o.g.\ we may assume $t=1$, by the self-similiarity of Brownian motion. Fix $m\in\mathbb N$. The sampling formula (\ref{eqn:samplingindicator}) gives
\begin{equation} \label{eqn:samplingideabm1}
  \Ex \left [\left (\int_0^1 \indicator{  B_t>0 }dt \right )^m\right ] = \Prob{ B_{U_1}>0, \ldots, B_{U_m}>0  } = \Prob{ B_{U_{m:1}}>0, \ldots, B_{U_{m:m}}>0  },
\end{equation}
where $(U_i)$ are i.i.d.\ uniform in $[0,1]$ independent of the Brownian motion and $(U_{m:i})$ is the corresponding order statistics. Further, let $(E_i)$ be i.i.d.\ standard exponential random variables independent of the $(U_i)$ and of the Brownian motion and set $T_k:=\sum_{i=1}^k E_i$, $k=0,1,2,\ldots$. Conditioning on $T_{m+1}$ and on the $(U_i)$ (which are independent of the Brownian motion $B$), we can use the self-similarity of Brownian motion, $(B_{s})_{s\geq 0}=_d (T_{m+1}^{-1/2} B_{T_{m+1}s})_{s\geq 0}$, to see that 
the probability in (\ref{eqn:samplingideabm1}) equals
\begin{equation} \label{eqn:samplingideabm2}
\Prob{ B_{T_{m+1} U_{m:1}}>0, \ldots, B_{T_{m+1} U_{m:m}}>0  } =  \Prob{ B_{T_1}>0, \ldots, B_{T_m}>0  },
\end{equation}
where we used the independence of $(U_i)$ and $(E_i)$ from the Brownian motion and the fact that the vector $(T_{m+1} U_{m:1},\ldots,T_{m+1} U_{m:m})$ has the same distribution as $(T_1,\ldots,T_m)$, see e.g.\ Theorem V.2.2 in \cite{Devroye1986}.\smallskip

Thus, the moments of the occupation time of Brownian motion on the left-hand side in  (\ref{eqn:samplingideabm1})  are given by the persistence probabilities on the right-hand side in (\ref{eqn:samplingideabm2}). We note that these are the persistence probabilities of the Laplace random walk $R_i:=B_{T_i}$, $i=0,1,2,\ldots$. It is well-known that the probabilities on the right-hand side in (\ref{eqn:samplingideabm2}) are equal to $2^{-2m}\binom{2m}{m}$, which are -- in turn --  the moments of the arcsine distribution. Since the occupation times are bounded the proof of the second arcsine law of Brownian motion is complete.\smallskip

To keep the proof self-contained let us also give an elementary argument for the persistence probabilities in (\ref{eqn:samplingideabm2}). Define $\tau:=\min\{ j\in\{ 0,\ldots, m\} : R_j=\max_{k\in\{0,\ldots,m\}} R_k \}$ to be the first (and only) index where the maximum of $(R_k)_{k=0}^m$ is attained. 
Since $\tau\in\{0,\ldots,m\}$ by construction, we must have (using the continuity of the distribution of the $R_k$ in the second step):
\begin{align*}
1 =& \sum_{j=0}^m \Prob{ \tau = j} = \sum_{j=0}^m \Prob{ R_k < R_j, k=0,\ldots, j-1,j+1,\ldots, m} 
\\
=& \sum_{j=0}^m \Prob{ R_k < R_j, k=0,\ldots, j-1} \cdot \Prob{ R_k < R_j, k=j+1,\ldots, m} 
\\
=& \sum_{j=0}^m \Prob{ R_k>0, k=1,\ldots, j} \cdot \Prob{ R_k>0, k=1,\ldots, m-j},
\end{align*}
where we used the independence of increments of $(R_k)$ in the third step and the stationarity and the symmetry of the increments of $(R_k)$ in the fourth step. It is again elementary to show that the unique solution of this recursive equation is given by $\Prob{ R_k>0, k=1,\ldots, j} = \frac{(2j-1)!!}{(2j)!!} =  2^{-2j}\binom{2j}{j}$ for all $j=0,\ldots,m$.
To see the latter, multiply the recursion by $x\in[0,1)$, sum in $m$, and the generating function of the probabilities in question is found to be $(1-x)^{-1/2}$, cf.\ \cite{dembodinggao2013} for similar arguments.
\end{proof}
    The proof does not fully use the Brownian properties, in particular, continuity does not play a role in the sampling. Actually, exactly the same argument works for symmetric strictly stable L\'evy processes, recovering the arcsine law first derived by Kac~\cite{Kac1951}. Below we also provide a simple proof for the occupation time of a Brownian bridge to be $\mathcal U([0,1])$ but we do so directly in the more general setting of L\'evy bridges, cf.\ Theorem~\ref{thm:occupation_time_levy_bridge}.

\subsection{Spherical fractional Brownian motion}\label{sec:sfBM}
We now come to the main result of this article, the occupation `area' law for L\'evy's spherical Brownian motion and the fractional generalisation. Fix $H\in (0, 1/2]$ and $d\in\mathbb N$, $d\geq 2$, and let $\inorm x\coloneqq\sqrt{x_1^2+\cdots+x_d^2}$ denote the Euclidean norm of $x\in\mathbb R^d$. Recall that spherical fractional Brownian motion (spherical fBM) $X\coloneqq (X_t)_{t\in\mathbb S^{d-1}}$ is a centred Gaussian process on the unit $(d-1)$-sphere $\mathbb S^{d-1}\coloneqq \{ x\in\mathbb R^d\colon \inorm x=1 \}$ such that $X_O=0$ a.s.~for some arbitrary fixed point $O\in\mathbb S^{d-1}$ with 
\begin{align}\label{eq:sfbm_increment}
  \Ex[(X_s-X_t)^2] = (d(s, t))^{2H},\qquad s, t\in \mathbb S^{d-1},
\end{align}
where $d(s, t)$ denotes the geodesic distance between two points $s, t$ on $\mathbb S^{d-1}$. The special case $H=1/2$ was first studied by Paul L\'evy~\cite{Levy1965} and is sometimes referred to as L\'evy's spherical Brownian motion.  Istas~\cite{Istas2005} showed that there exists a Gaussian process indexed by $\mathbb S^{d-1}$ with covariance structure as in~\eqref{eq:sfbm_increment} if and only if $H\leq 1/2$. Let
\begin{align*}
  A\coloneqq\int_{\mathbb S^d} \indicator{X_s>0}\sigma^{d-1}(ds)
\end{align*}
denote the `area' that $X$ spends positive, or rather the measure of the area on $\mathbb S^{d-1}$ on which $X$ is positive as measured by the surface measure $\sigma^{d-1}$.
\smallskip

\begin{theorem}[Occupation time of spherical fractional Brownian motion]\label{thm:sfbm_sojourn_time}
Let $(X_t)_{t\in \mathbb S^{d-1}}$ be a spherical fractional Brownian motion $X$ with Hurst parameter $H\in (0, 1/2]$. Then
$$
  \abs{\sigma^{d-1}}^{-1}\int_{\mathbf S^{d-1}} \indicator{ X_s >0 }  \sigma^{d-1}(ds),
$$
i.e.\ the `area' that $X$ spends positive, is uniformly distributed on $(0, 1)$, where $\abs{\sigma^{d-1}}=\sigma^{d-1}(\mathbb S^{d-1})=2\pi^{\frac d 2}/\Gamma(\frac d 2)$ is the surface area of the unit $(d-1)$-sphere.
\end{theorem}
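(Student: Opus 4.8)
The overall strategy mimics the arcsine-law proof: compute the $m$-th moment of the normalized occupation area via Proposition~\ref{prop:sampling}, and recognize the answer as the $m$-th moment $\frac{1}{m+1}$ of the uniform distribution on $(0,1)$. Since the occupation area is bounded, matching all moments finishes the job. By Proposition~\ref{prop:sampling} with $I = \mathbb S^{d-1}$, $\alpha = \sigma^{d-1}$, $S = (0,\infty)$, we get
\begin{equation} \label{eq:plan-sfbm-moments}
  \Ex\!\left[\left(\abs{\sigma^{d-1}}^{-1}\!\int_{\mathbb S^{d-1}}\!\indicator{X_s>0}\,\sigma^{d-1}(ds)\right)^{\!m}\right]
  = \Prob{X_{V_1}>0,\ldots,X_{V_m}>0},
\end{equation}
where $V_1,\ldots,V_m$ are i.i.d.\ uniform on $\mathbb S^{d-1}$ (w.r.t.\ normalized surface measure) and independent of $X$. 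So the task is to show this persistence probability equals $\frac{1}{m+1}$.

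First I would fix the sampled points $V_1 = v_1,\ldots,V_m = v_m$ and, writing $Y_i := X_{v_i}$, observe that $(Y_1,\ldots,Y_m)$ together with $X_O = 0$ is a centred Gaussian vector whose law depends only on the pairwise geodesic distances via~\eqref{eq:sfbm_increment}: $\Ex[(Y_i - Y_j)^2] = d(v_i,v_j)^{2H}$ and $\Ex[Y_i^2] = d(O,v_i)^{2H}$. The key structural fact I want is that, after conditioning on the $v_i$, one can realize $(Y_1,\ldots,Y_m)$ as successive values of an \emph{exchangeable} sequence — more precisely, that the \emph{unconditional} random vector $(X_{V_1},\ldots,X_{V_m})$ is exchangeable and, crucially, that its sign pattern has the property that all $2^m$ orderings of the relative ranks (equivalently, the event $\{X_{V_1}>0,\ldots,X_{V_m}>0\}$ together with analogous events obtained by inserting the ``phantom'' point $O$ at a random position among the $V_i$) are equally likely. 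Concretely, I would augment the sample: let $V_0$ be an \emph{additional} independent uniform point on $\mathbb S^{d-1}$ and note that $(X_{V_0}, X_{V_1},\ldots,X_{V_m})$ is exchangeable (the distribution of spherical fBM is invariant under isometries of the sphere fixing nothing canonical — rather, one uses that relabeling the $m+1$ i.i.d.\ uniform points permutes the Gaussian vector). Then $X_{V_1} - X_{V_0},\ldots, X_{V_m} - X_{V_0}$ is an exchangeable increment sequence, and combined with the symmetry $X \stackrel{d}{=} -X$ one argues — exactly as in the cyclic/Sparre Andersen argument in the arcsine proof — that $\Prob{X_{V_1} - X_{V_0} > 0,\ldots, X_{V_m} - X_{V_0} > 0}$ equals the probability that $X_{V_0}$ is the minimum among $X_{V_0},\ldots,X_{V_m}$, which by exchangeability is $\frac{1}{m+1}$. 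The remaining point is to reduce the event $\{X_{V_i} > 0,\ i\le m\}$ to such a ``first point is the min'' event; here one uses that $O$ plays the role of a deterministic reference point with $X_O = 0$, and that one may equivalently take a uniformly random point as reference, since the conditional law of $(X_{V_1},\ldots,X_{V_m})$ given the anchor being at $O$ versus at a random $V_0$ are linked by the exchangeability of the full configuration $(O \text{ or } V_0, V_1,\ldots,V_m)$ of i.i.d.\ uniform points.

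The step I expect to be the main obstacle is precisely this last reduction: justifying rigorously that replacing the fixed anchor $O$ (where $X$ vanishes by definition) with a uniformly random independent point $V_0$ does not change the persistence probability, i.e.\ that $\Prob{X_{V_1}>0,\ldots,X_{V_m}>0} = \Prob{X_{V_1}-X_{V_0}>0,\ldots,X_{V_m}-X_{V_0}>0}$. The clean way is to note that spherical fBM's finite-dimensional distributions depend on the configuration of index points \emph{only through their mutual geodesic distances}, and that the joint law of $(d(O,V_i), d(V_i,V_j))_{i,j}$ for the fixed anchor $O$ plus $m$ uniform points is identical to the law of $(d(V_0,V_i),d(V_i,V_j))_{i,j}$ for $m+1$ uniform points (by rotational invariance of the uniform measure, the single ``special'' point can be rotated to $O$). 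This would also handle the subtlety that the increment process $(X_{V_i} - X_{V_0})$ is genuinely exchangeable with symmetric increments, which is what powers the Sparre Andersen counting identity. Once that is in place, the combinatorial identity $\Prob{\text{exchangeable sequence of }m+1\text{ values, first is strict min}} = \frac{1}{m+1}$ is immediate (all $m+1$ positions equally likely to be the argmin, with ties having probability zero by continuity of the Gaussian law), and~\eqref{eq:plan-sfbm-moments} gives $\frac{1}{m+1}$, identifying the distribution as $\mathcal U(0,1)$.
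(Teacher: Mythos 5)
Your proposal is correct, and it reaches the moment identity $\Prob{X_{V_1}>0,\ldots,X_{V_m}>0}=\frac{1}{m+1}$ by a genuinely different route from the paper. The paper also starts from Proposition~\ref{prop:sampling}, but then \emph{orders} the sampled points lexicographically via spherical coordinates, proves that the increments $(X_{U_{m:k}}-X_{U_{m:k-1}})_{k=1}^{m+1}$ of the resulting walk, anchored at $O$ at both ends, are only \emph{cyclically} exchangeable (Propositions~\ref{prop:sfbm_increments} and~\ref{prop:cyclically_exchangeable_distances}, resting on Lemma~\ref{lem:uniform_sample_on_sphere}), and then invokes a cyclic-shift fluctuation identity for bridges with cyclically exchangeable increments (Proposition~\ref{prop:uniform_law_random_bridges}). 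You never order the points: you adjoin an independent uniform anchor $V_0$, note that conditionally on the sample the vector $\bigl(X_{V_i}-X_{V_0}\bigr)_{i=1}^m$ is centred Gaussian with covariance $\frac12\bigl(d(V_0,V_i)^{2H}+d(V_0,V_j)^{2H}-d(V_i,V_j)^{2H}\bigr)$, which carries no residual dependence on $O$, and that by rotational invariance of the uniform measure the distance configuration of $(V_0,V_1,\ldots,V_m)$ has the same law as that of $(O,V_1,\ldots,V_m)$; hence the persistence probability equals $\Prob{X_{V_0}=\min_{0\le i\le m}X_{V_i}}$, and full exchangeability of $(X_{V_0},\ldots,X_{V_m})$ (the $V_i$ are i.i.d.\ and the covariance is a symmetric function of the configuration), together with the a.s.\ absence of ties, gives $\frac{1}{m+1}$ outright. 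Your version is shorter and sidesteps the order statistics and the cyclic-exchangeability machinery entirely, at the price of using the full rotation group of the sphere (the paper only exploits rotations in the $(x_{d-1},x_d)$-plane via Lemma~\ref{lem:uniform_sample_on_sphere}); what the paper's route buys is Propositions~\ref{prop:sfbm_increments} and~\ref{prop:uniform_law_random_bridges} as statements of independent interest and a template shared with its L\'evy-bridge proof. One cosmetic point: the appeals to the symmetry $X\stackrel{d}{=}-X$, to ``all $2^m$ orderings,'' and to a Sparre Andersen-type counting identity are superfluous --- once the anchor has been randomised, the only fact needed is that the argmin of an exchangeable vector without ties is uniform over the $m+1$ positions.
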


\subsection{L\'evy processes and bridges} \label{sec:levysection}
In this section, we apply the sampling formula to compute all moments of occupation times of one-dimensional L\'evy processes, i.e.\ stochastic processes with independent and stationary increments. Let $(X_t)_{t\geq 0}$ be a L\'evy process. We characterize the distribution of the random variable 
\begin{equation} \label{eqn:levyoccupation}
    A_t\coloneqq \int_0^t \indicator{X_s>0 }\, ds
\end{equation} by working out explicitly all its moments. In order to state the result, let us introduce some further notation. A partition of a set $S$ is a set, $\rho$ say, of nonempty pairwise disjoint subsets of $S$ whose union is $S$. The members of $\rho$ are also called the blocks of $\rho$. Let $\card S$ denote the cardinality of $S$ and for some natural number $n$ let ${\mathscr P}_n$ denote the set of all partitions of $\{1, \ldots, n\}$.
Further, we recall that $(f\ast g)(t):=\int_0^t f(t-s) g(s) d s$ is the convolution of two functions $f, g : [0,\infty)\to \mathbb R$. Sampling the occupations at Poisson times in combination with Spitzer's identity and a Bell polynomial trick yields the following moment formula:
\begin{theorem}[Occupation time of a L\'evy process]\label{thm:occupation_time}
Fix $m\geq 1$ arbitrarily. The $m$-th moment of the occupation time $A_t$ of the real-valued L\'evy process $X$ in the set $(0,\infty)$ is given by
  \begin{align}\label{eq:occupation_time}
    \Ex[A_t^m] &= \sum_{\rho\in{{\mathscr P}}_m}  \int_0^t \conv_{B\in\rho} \left (u^{\card B-1}\Prob{X_u>0}\right )(s)ds.
  \end{align}
In particular, the first two moments of $A_t$ are given by
\begin{align}\label{eq:occupation_time_moments}
  \Ex[A_t] &= \int_0^t \Prob{X_{s}>0} d s,\\\notag
  \Ex[A_t^2] &=\int_0^t s\Prob{X_{s}>0}ds+\int_0^t\int_0^s \Prob{X_u>0}\Prob{X_{s-u}>0}duds.
\end{align}
Equations~\eqref{eq:occupation_time} and~\eqref{eq:occupation_time_moments} still hold when their (strict) inequalities together with the (strict) inequality in the definition of the occupation time (\ref{eqn:levyoccupation}) are replaced by weak inequalities.
\end{theorem}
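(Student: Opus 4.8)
The plan is to combine the sampling formula of Proposition~\ref{prop:sampling} with a Poissonization of the sample times and Spitzer's combinatorial identity. First I would apply \eqref{eqn:samplingindicator} with $I=[0,t]$, $\alpha=$ Lebesgue measure, $S=(0,\infty)$, to write $\Ex[A_t^m]=t^m\,\Prob{X_{U_1}>0,\dots,X_{U_m}>0}$ where the $U_i$ are i.i.d.\ uniform on $[0,t]$ and independent of $X$. Passing to the order statistics $U_{m:1}<\dots<U_{m:m}$ and using independence of increments of the L\'evy process, the event becomes a persistence-type event for the embedded random walk $R_k:=X_{U_{m:k}}$, whose increments are the values of $X$ over the successive (size-biased, dependent) spacings of the order statistics. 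The dependence among uniform spacings is the obstacle to a direct random-walk argument, so the key device is to replace the fixed sample size $m$ by a Poisson number of samples: if instead we sample $X$ at the points of a rate-$\lambda$ Poisson process on $[0,t]$, or equivalently consider $N\sim\Pn(\lambda t)$ and i.i.d.\ uniform times, the spacings become (a scaling of) i.i.d.\ exponentials, so that the embedded process $R_k=X_{T_k}$ with $T_k=\sum_{i\le k}E_i$ (exponentials) is a genuine random walk with i.i.d.\ increments $X_{T_k}-X_{T_{k-1}}\overset{d}{=}X_{E_1}$.

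The next step is to encode the quantity we want via a generating function in $\lambda$. Sampling at Poisson times, the probability that \emph{all} sampled positions are positive, summed appropriately against the Poisson weights, produces an exponential generating function whose coefficients are exactly the persistence probabilities $p_n:=\Prob{R_1>0,\dots,R_n>0}$ of the random walk $(R_k)$. By Spitzer's identity, the generating function $\sum_{n\ge0}p_n z^n$ equals $\exp\bigl(\sum_{n\ge1}\frac{z^n}{n}\Prob{R_n>0}\bigr)$, and $\Prob{R_n>0}=\Prob{X_{T_n}>0}$ where $T_n\sim\Gam(n,1)$ has density $u^{n-1}e^{-u}/(n-1)!$; hence $\Prob{R_n>0}=\int_0^\infty \frac{u^{n-1}e^{-u}}{(n-1)!}\Prob{X_u>0}\,du$. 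Substituting and carefully tracking the $\lambda$-dependence (the exponential $e^{-u}$ will combine with the Poisson normalisation to leave a clean Laplace transform in $\lambda$), one arrives at an identity of the form $\sum_m \Ex[A_t^m]\frac{\lambda^m}{m!}$-type generating object equal to $\exp$ of a single-convolution term. Then I would extract the $m$-th moment by the exponential formula for the exponential of a power series: the coefficients of $\exp(\sum_{B\ge1} c_B z^B/\text{(something)})$ are sums over set partitions $\rho\in\mathscr P_m$ of products $\prod_{B\in\rho}$ of the corresponding coefficients — this is precisely the Bell-polynomial identity alluded to in the statement — which turns the product of Spitzer factors into the sum over partitions of convolutions $\conv_{B\in\rho}(u^{\card B-1}\Prob{X_u>0})$, evaluated and integrated over $[0,t]$ as in \eqref{eq:occupation_time}.

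For the particular cases \eqref{eq:occupation_time_moments}, I would just specialise: for $m=1$ there is only the partition into one block, giving $\int_0^t\Prob{X_s>0}ds$; for $m=2$ there are two partitions, $\{\{1,2\}\}$ contributing $\int_0^t (s^{1}\Prob{X_s>0})\,ds$ — wait, $\card B-1=1$ so the integrand is $s\,\Prob{X_s>0}$ — and $\{\{1\},\{2\}\}$ contributing the double convolution $\int_0^t\int_0^s\Prob{X_u>0}\Prob{X_{s-u}>0}\,du\,ds$, matching the claimed formula. Finally, the remark that strict inequalities may be replaced by weak ones follows because the whole derivation only ever used independence and stationarity of increments together with the sampling identity, and the sampling identity \eqref{eqn:samplingindicator} holds verbatim for $S=[0,\infty)$; the only place continuity/atomlessness could matter is in claiming $\Prob{X_u>0}=\Prob{X_u\ge0}$, which we do \emph{not} need since we simply carry whichever version of $\Prob{X_u\in S}$ appears.

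I expect the main obstacle to be the bookkeeping in the Poissonization-and-generating-function step: one must Poissonize \emph{correctly} so that the exponential spacings have the right (unit, after rescaling by $t$ and $\lambda$) rate, keep the self-scaling of the L\'evy process straight when converting $X_{T_n/\lambda}$ back to $X$ at a $\Gam(n,\lambda)$ time, and confirm that de-Poissonization (extracting the fixed-$m$ moment from the $\lambda$-generating function) reproduces exactly the factor $u^{\card B-1}$ with no stray constants. Once the generating-function identity $\sum_m \frac{\lambda^m}{?}\Ex[A_t^m]=\exp(\cdots)$ is pinned down, applying Spitzer's formula and then the set-partition expansion of an exponential of a series is routine.
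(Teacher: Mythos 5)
Your plan is essentially the paper's proof: Poissonize the sampling times so that the embedded process becomes a genuine random walk, apply Spitzer's identity, expand the exponential via the set-partition (Bell polynomial) formula, recognise the product of Gamma-weighted positivity probabilities as a product of Laplace transforms and hence a convolution, and invert. The partition bookkeeping for $m=1,2$ and the remark on weak versus strict inequalities are also handled exactly as in the paper.

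One step of your sketch would fail as literally written and needs the paper's fix. Sampling a Poisson \emph{number} $N\sim\mathrm{Po}(\lambda t)$ of i.i.d.\ uniform points on $[0,t]$ does \emph{not} make the spacings i.i.d.\ exponentials (conditioned on $N$ they are still Dirichlet), and there is no ``self-scaling of the L\'evy process'' available to convert $X$ at a $\mathrm{Gamma}(n,\lambda)$ time into $X$ at a $\mathrm{Gamma}(n,1)$ time --- that trick is reserved for the Brownian/stable case and is exactly what the paper says it must avoid here. The correct device is to keep $m$ fixed and take the Laplace transform in $t$: one proves the Poisson sampling identity $\int_0^\infty e^{-qt}\,\Ex[A_t^m]\,dt=\frac{m!}{q^{m+1}}\Prob{X_{T_1^{(q)}}>0,\ldots,X_{T_m^{(q)}}>0}$, where $T_k^{(q)}$ are the arrival times of an independent rate-$q$ Poisson process on $[0,\infty)$ (the paper verifies this via the $q$-potential measure and a symmetrisation of the iterated integral). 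After that, Spitzer plus the Bell-polynomial expansion gives $\frac1q\sum_{\rho}\prod_{B\in\rho}\mathcal L\bigl(s^{\card B-1}\Prob{X_s>0}\bigr)(q)$, and uniqueness of Laplace transforms yields \eqref{eq:occupation_time}; the extra factor $1/q$ is what produces the outer $\int_0^t\cdot\,ds$. If you replace your ``Poisson number of uniforms on $[0,t]$'' step by this Laplace-transform-in-$t$ identity, the rest of your argument goes through verbatim.
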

Theorem~\ref{thm:occupation_time} shows how to work out explicitly the moments of  the distribution of the occupation time above zero of a L\'evy process $X$. In particular, the formula shows that the distribution of $A_t$ is completely determined by the positivity function $s\mapsto \Prob{X_s>0}$. In fact, the only ingredient coming from the L\'evy process in the moment formula (\ref{eq:occupation_time}) is the positivity function. Equivalently, the theorem shows that the first moment of the occupation times already determines their entire distribution.\smallskip

There are a few situations in which the moments formulas can be used to compute the occupation time distributions. One example, that could not be treated in the literature before, is the $\frac{1}{2}$-stable subordinator with negative drift $\mu$ for which the positivity function is known as $\Prob{X_{t}>0}=\text{erf}(\sqrt{t/(4\mu)})$. The slightly tedious computations will be presented in an accompanying article. A more common situation is that of constant positivity, i.e.\ $\Prob{X_t>0}=c$ for all $t>0$, which occurs for instance in the case of strictly stable L\'evy processes. Inserting  into (\ref{eq:occupation_time}) leaves us with a simple  combinatorial expression for the moments of $A_t$. A short computation shows that those expressions are precisely those of the generalised arcsine distributions, i.e.\ a beta distribution with parameters $(a, b)=(c, 1-c)$ for some $c\in (0, 1)$. 

\begin{corollary}[cf.~\cite{GetoorSharpe1994}]\label{cor:GS} Fix $c\in(0,1)$.
The following two statements are equivalent:
\begin{enumerate}
    \item We have $\Prob{X_t>0}=c$ for all $t>0$.
    \item The occupation time $t^{-1} A_t=t^{-1} \int_0^t \indicator{ X_s>0 } ds$ is generalised arcsine distributed with parameter $c\in(0,1)$ for all $t>0$.
\end{enumerate}
\end{corollary}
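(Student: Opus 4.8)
The plan is to establish the two implications of this equivalence separately; in each direction one reads off moments from formula~\eqref{eq:occupation_time} of Theorem~\ref{thm:occupation_time} and uses that the bounded random variable $t^{-1}A_t\in[0,1]$ is uniquely determined by its moments.

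For $(1)\Rightarrow(2)$, I would substitute $\Prob{X_u>0}\equiv c$ into~\eqref{eq:occupation_time}. A partition $\rho\in\mathscr{P}_m$ with blocks of sizes $b_1,\dots,b_k$ (so $b_1+\dots+b_k=m$) then contributes the iterated convolution of the functions $u\mapsto c\,u^{b_i-1}$, and the elementary beta integral $u^{a-1}\ast u^{b-1}=\tfrac{\Gamma(a)\Gamma(b)}{\Gamma(a+b)}\,u^{a+b-1}$ telescopes to give $\conv_{B\in\rho}\bigl(c\,u^{\card B-1}\bigr)(s)=c^{k}\bigl(\prod_{i=1}^k(b_i-1)!\bigr)s^{m-1}/(m-1)!$. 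Integrating in $s$ over $[0,t]$ and summing over $\rho$ leaves the purely combinatorial quantity
\[
\Ex[A_t^m]=\frac{t^m}{m!}\sum_{\rho\in\mathscr{P}_m}c^{\card\rho}\prod_{B\in\rho}(\card B-1)!\,.
\]
The key step is then the classical identity $\sum_{\rho\in\mathscr{P}_m}x^{\card\rho}\prod_{B\in\rho}(\card B-1)!=\risfac{x}{m}=x(x+1)\cdots(x+m-1)$, obtained by grouping partitions according to their number of blocks and recognising $\sum_{\card\rho=k}\prod_{B\in\rho}(\card B-1)!$ as the unsigned Stirling number of the first kind (the number of permutations of $[m]$ with $k$ cycles). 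This yields $\Ex[A_t^m]=t^m\risfac{c}{m}/m!=t^m\,\Gamma(c+m)/(m!\,\Gamma(c))$, which is exactly $t^m$ times the $m$-th moment of a $\Beta(c,1-c)$ variable (that moment being $\Gamma(c+m)\Gamma(1)/(\Gamma(c)\Gamma(m+1))=\Gamma(c+m)/(m!\,\Gamma(c))$, since $c+(1-c)=1$); hence $t^{-1}A_t$ is generalised arcsine with parameter $c$.

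For $(2)\Rightarrow(1)$ the first moment alone suffices. If $t^{-1}A_t\sim\Beta(c,1-c)$ for every $t>0$ then $\Ex[A_t]=ct$, while~\eqref{eq:occupation_time_moments} gives $\Ex[A_t]=\int_0^t\Prob{X_s>0}\,ds$; comparing and differentiating in $t$ forces $\Prob{X_s>0}=c$ for Lebesgue-a.e.\ $s>0$. To promote this to all $s>0$ I would invoke right-continuity of the paths: since $X_{s+h}\to X_s$ a.s.\ as $h\downarrow0$, Fatou's lemma applied along a sequence $h_n\downarrow0$ avoiding the null exceptional set gives $\Prob{X_s>0}\le c$, and the symmetric argument with $\{X_s<0\}$ gives $\Prob{X_s>0}+\Prob{X_s=0}\ge c$; as $X_s$ is atomless (outside the degenerate compound-Poisson-plus-drift case) this pins down $\Prob{X_s>0}=c$ for all $s>0$. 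The weak-inequality version follows verbatim using the corresponding assertion of Theorem~\ref{thm:occupation_time}.

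I expect the only genuinely non-routine point to be the combinatorial evaluation of $\sum_{\rho\in\mathscr{P}_m}c^{\card\rho}\prod_{B\in\rho}(\card B-1)!$ as the rising factorial $\risfac{c}{m}$; the convolution bookkeeping and the comparison with beta moments are mechanical. A secondary, essentially cosmetic, nuisance is the passage from ``a.e.\ $t$'' to ``all $t$'' in the reverse implication — vacuous as far as the distribution of $A_t$ is concerned, since~\eqref{eq:occupation_time} only sees $s\mapsto\Prob{X_s>0}$ through its integrals, but requiring the short path-regularity remark above for the pointwise formulation of item~(1).
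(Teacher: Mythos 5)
Your proposal is correct and follows essentially the same route as the paper: substitute the constant positivity function into~\eqref{eq:occupation_time}, evaluate the convolutions via the beta integral (the paper isolates this as Lemma~\ref{lem:monomials_convolution}), identify $\sum_{\rho}c^{\card\rho}\prod_{B\in\rho}(\card B-1)!$ with the rising factorial through unsigned Stirling numbers, match the beta moments, and for the converse read off the first-moment formula~\eqref{eq:occupation_time_moments}. Your extra care in upgrading $\Prob{X_s>0}=c$ from a.e.\ $s$ to all $s$ (via stochastic continuity) addresses a point the paper's proof passes over silently, but it does not change the substance of the argument.
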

The symmetric case $c=\frac{1}{2}$ thus recovers the classical arcsine law. The corollary can be deduced from Theorem \ref{thm:occupation_time} with a short combinatorial computation because the moments of generalised arcsine distributions have the combinatorial form
 \begin{align*}
 	\frac{\Gamma(m+c)}{\Gamma(m+1)\Gamma(c)}=\frac{\risfac{c}{m}}{m!},
 \end{align*}
 where $\risfac{x}{m}\coloneqq x(x+1)\cdots (x+m-1)$ denotes the $m$-the rising factorial power of $x\in\mathbb R$, and the last identity is easily seen by induction. The corollary was already proved by Getoor and Sharpe~\cite{GetoorSharpe1994} by guessing of Laplace transforms. Our proof highlights once more the combinatorial nature behind occupation times seen through their moments.\smallskip

Finally, we use the sampling method to provide a simple proof of the uniformity of occupation times for L\'evy bridges.
\begin{theorem}[Occupation time of a L\'evy bridge; cf.~\cite{FitzsimmonsGetoor1995} and~\cite{Knight1996}]\label{thm:occupation_time_levy_bridge}
    Let $X$ denote a L\'evy process and consider the stochastic process $\mathring X\coloneqq ( \mathring X_t)_{t\in [0, 1]}$ defined by $\mathring X_t\coloneqq X_t-tX_1$, that we refer to as the L\'evy bridge induced by $X$. Provided that the distribution of $X_1$ has no atoms, the occupation time $\mathring A\coloneqq \int_0^1 \indicator{\mathring X_t>0} dt$ of the L\'evy bridge $\mathring X$ is uniformly distributed on $(0, 1)$.
\end{theorem}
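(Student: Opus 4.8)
The plan is to apply the sampling formula (Proposition~\ref{prop:sampling}) and then evaluate the resulting persistence probability by a cycle-lemma argument; as in the proof of Theorem~\ref{thm:levysfirstarcsinelaw} this reduces everything to elementary combinatorics, now via the \emph{cyclic} exchangeability available for bridges. Taking $I=[0,1]$, $\alpha$ the Lebesgue measure and $S=(0,\infty)$ (so $|\alpha|=1$), Proposition~\ref{prop:sampling} expresses $\Ex[\mathring A^m]$ as a persistence probability. Since $\mathring A$ takes values in $[0,1]$ it is determined by its moments, and the $m$-th moment of the uniform law on $(0,1)$ equals $1/(m+1)$; hence it suffices to prove
\[
  \Ex[\mathring A^m]=\Prob{\mathring X_{U_1}>0,\ \ldots,\ \mathring X_{U_m}>0}=\frac1{m+1},\qquad m\in\mathbb N,
\]
where $U_1,\dots,U_m$ are i.i.d.\ uniform on $[0,1]$ and independent of $X$.

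First I would pass to the order statistics $0=V_0<V_1<\dots<V_m<V_{m+1}=1$ of the sample points, so that the event above becomes $\{\mathring X_{V_i}>0,\ i=1,\dots,m\}$, and introduce the spacings $S_i:=V_i-V_{i-1}$, the Lévy increments $G_i:=X_{V_i}-X_{V_{i-1}}$, and the bridge increments $Y_i:=\mathring X_{V_i}-\mathring X_{V_{i-1}}=G_i-S_i\sum_j G_j$ for $i=1,\dots,m+1$ (note $\sum_j G_j=X_1$). The decisive structural claim is that the marked spacings $(S_1,Y_1),\dots,(S_{m+1},Y_{m+1})$ form an exchangeable sequence. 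Indeed, $(S_1,\dots,S_{m+1})$ is uniform on the simplex, hence exchangeable, and, conditionally on the spacings, the $G_i$ are independent with $G_i$ distributed as $X_{S_i}$ (the $U_i$ being independent of the Lévy process $X$, which has independent, stationary increments); thus the joint law of $\big((S_i),(G_i)\big)$ is invariant under simultaneous permutation of $\{1,\dots,m+1\}$, and since the map $(S_i,G_i)_i\mapsto(S_i,\,G_i-S_i\sum_j G_j)_i=(S_i,Y_i)_i$ is equivariant under such permutations, $(S_i,Y_i)_i$ — and in particular $(Y_i)_i$ — is exchangeable, and a fortiori cyclically exchangeable.

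Now $(Y_1,\dots,Y_{m+1})$ is cyclically exchangeable, sums to zero, and has partial sums $P_i:=Y_1+\dots+Y_i=\mathring X_{V_i}$ with $P_0=P_{m+1}=0$. Here the no-atom hypothesis enters: for fixed $s\ne t$ in $[0,1]$ with $\{s,t\}\ne\{0,1\}$ one writes $\mathring X_s-\mathring X_t$ as a linear combination, with nonzero coefficients, of the three independent increments of $X$ over $[0,s\wedge t]$, $[s\wedge t,s\vee t]$ and $[s\vee t,1]$; at least one of these is non-atomic because $X_1$ is, so $\mathring X_s-\mathring X_t$ has no atom, and therefore a.s.\ the values $P_0,P_1,\dots,P_m$ are pairwise distinct. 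Consequently there is a unique minimiser $J$ of $\{P_0,\dots,P_m\}$, and the cycle lemma gives that, of the $m+1$ cyclic shifts of $(Y_1,\dots,Y_{m+1})$, exactly the one rooted at $J$ has all of its first $m$ partial sums strictly positive. Hence $\sum_{k=0}^{m}\indicator{P^{(k)}_1>0,\ldots,P^{(k)}_m>0}=1$ almost surely; taking expectations and using the cyclic exchangeability of $(Y_i)_i$ (so the $m+1$ summands have equal expectation) yields $\Prob{P_i>0,\ i=1,\dots,m}=\frac1{m+1}$, and the theorem follows.

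The step I expect to be the main obstacle is the exchangeability of the marked spacings: what makes it work is that the sampling formula turns the $m$-th moment into a persistence probability for the bridge over a \emph{random} partition of $[0,1]$ whose block-lengths are exchangeable — rather than over a deterministic mesh — and one must check that forming the bridge increments $Y_i=G_i-S_i\sum_j G_j$ out of the raw increments preserves joint exchangeability with the spacings. The secondary point requiring care is the appeal to the no-atom hypothesis to secure a.s.\ distinct partial sums, which is precisely the hypothesis under which the cycle-lemma step isolates a single successful rotation.
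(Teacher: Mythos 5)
Your proof is correct, and while it follows the paper's overall template (sampling formula $\to$ exchangeability of increments over the uniform spacings $\to$ a cycle-lemma count), the combinatorial core is executed differently. The paper keeps the raw L\'evy increments and works in the plane: it encodes the condition $\mathring X_{V_k}>0$ as the space--time point $(V_k, X_{V_k})$ lying in the half-plane $H((1,X_1))$ (Lemma~\ref{lem:baxter_corollary}) and then invokes Baxter's two-dimensional combinatorial lemma, whose hypothesis of skew increments is where the no-atom assumption enters there. You instead form the one-dimensional bridge increments $Y_i=G_i-S_iX_1$ outright, check that the map $((S_i),(G_i))\mapsto((S_i),(Y_i))$ is permutation-equivariant so that the $Y_i$ inherit the exchangeability of the pairs $(S_i,G_i)$, and then apply the scalar cycle lemma to the zero-sum sequence $(Y_i)$ --- which is exactly the content of the paper's Proposition~\ref{prop:uniform_law_random_bridges}, used there only for the spherical fBM theorem. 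Your route is therefore slightly more unified (both theorems reduce to the same one-dimensional fluctuation result), and your exchangeability argument via the conditional law of the increments given the spacings is a clean substitute for the paper's L\'evy--Khinchine computation; the Baxter route, in exchange, keeps the bridge transformation implicit in the planar geometry rather than performing it on the increments. Your verification of the remaining hypothesis is also sound: decomposing $\mathring X_s-\mathring X_t$ as a linear combination with nonzero coefficients of independent increments of $X$, at least one of which must be non-atomic because $X_1$ is, yields a.s.\ pairwise distinct partial sums and hence the unique minimiser that the cycle-lemma step requires.
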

    The result in Theorem~\ref{thm:occupation_time_levy_bridge} essentially goes back to Fitzsimmons and Getoor~\cite{FitzsimmonsGetoor1995} and Knight~\cite{Knight1996}. In fact, Knight~\cite[Theorem 2.1(a)]{Knight1996} provides a complete characterization of L\'evy bridges with uniform occupation times. However, we consider our derivation interesting in its own right, as the  sampling approach yields significantly simpler proofs.

\section{Proofs of the theorems} \label{sec:proofs}

\subsection{Proofs for the spherical fractional Brownian motion result}

Before we start with the proof of our results on spherical fBM let us first examine its index set.
Looking at $\mathbb S^{d-1}$ through the glasses of Cartesian coordinates, there seems to be no natural way to order its elements that suits our purposes. Instead, the spherical coordinates naturally suggest an order on the sphere that is very useful.
Let us recall the definition of \emph{spherical coordinates.} For any point $x\in\mathbb R^{d}$ with Euclidean norm $r\coloneqq r(x)\coloneqq\inorm x$ its angles $(\varphi_1, \ldots, \varphi_{d-2}, \theta)\coloneqq (\varphi_1(x), \ldots, \varphi_{d-2}(x), \theta(x))\in [0, \pi)^{d-2}\times [0, 2\pi)$ are defined (cf.~\cite{Blumenson1960}) implicitly by
\begin{align}\label{eq:def_spherical_coordinates}
\begin{split}
  x_k &= r\cos\varphi_k\prod_{j=1}^{k-1}\sin\varphi_j,\qquad 1\leq k\leq d-2,\\
  x_{d-1} &= r\sin\theta\prod_{j=1}^{d-2}\sin\varphi_{j},\\
    \end{split}
\end{align}
\begin{align*}
\intertext{which then implies}
  x_d &= r\cos\theta\prod_{j=1}^{d-2} \sin\varphi_j.
\end{align*}
We refer to $(r(x), \varphi_1(x), \ldots, \varphi_{d-2}(x), \theta(x))$ as the spherical coordinates of $x$. At times we allow ourselves to slightly misuse terminology and refer to the angles \linebreak $(\varphi_1(x), \ldots, \varphi_{d-2}(x), \theta(x))$ as the spherical coordinates of $x$, in particular if $r(x)=1$. In what follows we agree on using the following (lexicographic) order $\leq$ on $\mathbb S^{d-1}$. For $x, x'\in\mathbb S^{d-1}$ we set $x\leq x'$ if one of the following (mutually exclusive) conditions holds:
\begin{enumerate}[i)]
  \item $x=x'$,
  \item $\theta(x)<\theta(x')$,
  \item $\theta(x)=\theta(x')$, and there exists $1\leq k\leq d-2$ such that $\varphi_j(x)=\varphi_j(x')$ for all $1\leq j\leq k-1$, and $\varphi_k(x)<\varphi_k(x')$.
\end{enumerate}
  In what follows we will deal with a finite number $U_1, \ldots, U_m$ of, say, i.i.d.~r.v.s sampled according to some \textit{continuous} distribution with support $\mathbb S^{d-1}$. Consequently, for any pair $U_i, U_j$ all their angles are distinct a.s. Therefore, their order, i.e.\ whether $U_i\leq U_j$ or $U_j\leq U_i$, is completely determined by $\theta(U_i)$ and $\theta(U_j)$. This means that the order statistics $U_{m:1}\leq\cdots\leq U_{m:m}$ again only depends on the angles $\theta(U_1), \ldots, \theta(U_m)$ a.s.\smallskip
  
  At the heart of our proof of Theorem~\ref{thm:sfbm_sojourn_time} lies the following proposition on the increments of spherical fBM that we consider of interest in its own right. We call a finite permutation $\pi$ a cyclic permutation if there is a decomposition of $\pi$ into one cycle only. For $m\in\mathbb N$ we denote by $\Cyc(m)$ the set of all cyclic permutations of $\{1, \ldots, m\}$. A finite sequence $(Y_1, \ldots, Y_m)$ of r.v.s is called cyclically exchangeable if for any cyclic permutation $\pi\in\Cyc(m)$ the random vectors $(Y_1, \ldots, Y_m)$ and $(Y_{\pi(1)}, \ldots, Y_{\pi(m)})$ have the same distribution. Intuition suggests that the increments of spherical fractional Brownian motion $X$ induced by the order statistics of $m$ i.i.d.~points $U_1, \ldots, U_m$ sampled from $\mathbb S^{d-1}$ uniformly at random should be cyclically exchangeable. (This is most easily seen first in the special case $d=2$.) Our next proposition shows that this intuition is in fact true.

\begin{proposition}\label{prop:sfbm_increments}
  Fix $H\in (0, 1/2]$, and let $(X_t)_{t\in\mathbb S^{d-1}}$ denote spherical fBM with Hurst index $H$ as defined by~\eqref{eq:sfbm_increment} with the property that $X_O=0$ a.s.~for some fixed (deterministic) $O \in\mathbb S^{d-1}$ with $\theta(O)=0$. Let $U_1, U_2, \ldots$ denote a sequence of i.i.d.~r.v.s uniformly distributed on $\mathbb S^{d-1}$. Fix $m\in\mathbb N$. Then the sequence of increments
  \begin{align}\label{eq:sfbm_increments}
    (X_{U_{m:k}}-X_{U_{{m:k-1}}})_{k=1}^{m+1}
  \end{align}
is cyclically exchangeable, where we set $U_{m:0}\coloneqq U_{m:m+1}\coloneqq O$.
\end{proposition}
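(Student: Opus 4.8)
The plan is to reduce the assertion to a distributional symmetry of the matrix of pairwise geodesic distances of the sampled configuration, and then to establish that symmetry using the rotational structure of $\mathbb S^{d-1}$ around the axis of the longitude coordinate $\theta$.

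First I would condition on the sampled points. Writing $V_0\coloneqq V_{m+1}\coloneqq O$ and $V_k\coloneqq U_{m:k}$ for $1\le k\le m$, the increments $\Delta_k\coloneqq X_{V_k}-X_{V_{k-1}}$ form, conditionally on the $V_i$, a centred Gaussian vector. Since $X_O=0$ we have $\Ex[X_sX_t]=\tfrac12\left(d(s,O)^{2H}+d(t,O)^{2H}-d(s,t)^{2H}\right)$, and inserting this into $\Cov(\Delta_k,\Delta_l)$ makes every term carrying the base point cancel, so that
\[
  \Cov(\Delta_k,\Delta_l)=\tfrac12\left(d(V_k,V_{l-1})^{2H}+d(V_{k-1},V_l)^{2H}-d(V_k,V_l)^{2H}-d(V_{k-1},V_{l-1})^{2H}\right),
\]
a fixed function of the distance matrix $D\coloneqq (d(V_i,V_j))_{0\le i,j\le m}$ of the cyclic configuration alone. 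As a cyclic relabelling $V_0\to V_1\to\dots\to V_m\to V_0$ turns this conditional covariance into its cyclic permutation, the law of $(\Delta_1,\dots,\Delta_{m+1})$ is invariant under the cyclic shift — and hence, iterating, cyclically exchangeable — as soon as $D$ has that property (the reflection $\theta\mapsto-\theta$ of $\mathbb S^{d-1}$, which fixes $O$, likewise yields the reversal symmetry, so one even gets the full dihedral symmetry for free). Thus it suffices to show that $D$ is cyclically exchangeable.

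To analyse $D$ I would exploit the circle hidden in the longitude. From the coordinate formulas one has $\langle p,q\rangle=\langle a_p,a_q\rangle+b_pb_q\cos(\theta(p)-\theta(q))$, where $(a_p,b_p)$ collects all coordinates of $p$ other than $\theta(p)$; hence every geodesic distance $d(p,q)=\arccos\langle p,q\rangle$ depends on the longitudes only through $\cos(\theta(p)-\theta(q))$. For a uniform point on $\mathbb S^{d-1}$ the longitude is uniform on $[0,2\pi)$ and independent of $(a,b)$, so after sorting the $U_k$ by $\theta$ the $m+1$ arc-gaps $G_1,\dots,G_{m+1}$ between consecutive vertices of the cyclic configuration (with $O$ inserted at $\theta=0$) are exchangeable and independent of the collection of ``latitudes'' $(a,b)$, while the latitude carried by $V_k$, $1\le k\le m$, is i.i.d.\ from the common latitude law $\lambda$. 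When $d=2$ there are no latitudes, $D$ is a deterministic function of the exchangeable gaps, and cyclic exchangeability is immediate; this is the easy case mentioned just before the proposition.

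For $d\ge3$ the obstruction, and the heart of the proof, is that the latitude of $O=V_0$ is deterministic whereas those of $V_1,\dots,V_m$ are random, so the sequence of latitudes around the cycle is \emph{not} exchangeable and the previous argument fails directly. To get around this I would first prove that the law of $D$ is independent of the choice of $O$ among points with $\theta(O)=0$: by invariance under the subgroup $SO(d-2)$ fixing every longitude this law depends on $O$ only through $b_O\in(0,1]$, and one then checks it is constant in $b_O$, using the ``cosine folding'' fact that the distribution of the cosine of a partial sum of uniform spacings coincides with that of the cosine of a single uniform angle. Granting this, one may replace $O$ by an independent point of longitude $0$ with latitude drawn from $\lambda$; then all $m+1$ latitudes are i.i.d.\ $\lambda$ and independent of the exchangeable gaps, a cyclic relabelling of the configuration is exactly a simultaneous cyclic shift of the gap vector and of the latitude vector, and this preserves the joint law. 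Hence $D$ is cyclically exchangeable, and by the first reduction so is $(\Delta_1,\dots,\Delta_{m+1})$. The step I expect to be the main obstacle is precisely this $O$-independence of the law of $D$ — equivalently, making the cosine-folding argument rigorous for all entries of $D$ at once, which are coupled through the shared arc-gaps; everything else is bookkeeping.
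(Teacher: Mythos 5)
Your overall architecture is the same as the paper's: condition on the sample, observe that the increments are conditionally Gaussian with covariance matrix a fixed function of the geodesic distance matrix $D$ of the cyclic configuration $(O,U_{m:1},\dots,U_{m:m})$, so that cyclic exchangeability of the increments reduces to a distributional cyclic symmetry of $D$; then decompose the sphere into the longitude $\theta$ (whose gaps around the cycle are $\mathrm{Dirichlet}(1,\dots,1)$ and independent of everything else) and the remaining ``latitude'' coordinates (i.i.d.\ for the sampled points). This matches the paper's Lemma~\ref{lem:uniform_sample_on_sphere} and Propositions~\ref{prop:cyclically_exchangeable_distances} and~\ref{prop:sfbm_increments}, and your insistence on the \emph{full} distance matrix rather than only the consecutive distances is the right formulation of what the covariance argument needs.

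However, your write-up has a genuine gap, and it is exactly the one you flag yourself. Your reduction is only complete once you know that the law of $D$ does not depend on the latitude of $O$ (equivalently, that $O$ may be replaced by an independent point with latitude drawn from $\lambda$). Everything after that point is indeed bookkeeping, but that lemma is the entire content of the proposition for $d\ge 3$: with $\Phi_0$ deterministic and $\Phi_1,\dots,\Phi_m$ i.i.d., the cyclic sequence of latitudes is \emph{not} cyclically exchangeable, so no amount of rearranging the gap vector can save the argument without this input. The ``cosine folding'' observation you invoke — that $\cos$ of a partial sum of uniform spacings has the law of $\cos$ of a single uniform angle — only controls one entry of $D$ at a time (it shows the marginals of $d(O,V_j)$ are as they should be), whereas the entries of $D$ are coupled through the shared gaps and shared latitudes; you acknowledge this but do not supply the joint argument, so the proposal is a proof strategy rather than a proof. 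For comparison, the paper takes a shortcut here: Proposition~\ref{prop:cyclically_exchangeable_distances} asserts exchangeability of the consecutive distances $(d(U_{m:k},U_{m:k-1}))_{k=1}^{m+1}$ directly from the exchangeability of the gaps and the i.i.d.\ latitudes, and Proposition~\ref{prop:sfbm_increments} then cites this when permuting the covariance entries (including non-consecutive distances). You should either close your lemma — for instance by exhibiting an explicit measure-preserving transformation of the configuration that changes the latitude of the base point, or by conditioning on the unordered point set and running a cycle-lemma-type symmetrisation over which point plays the role of $O$ — or explain why the paper's shorter inference is already valid; as it stands, neither your text nor a literal reading of the cited decomposition does so.
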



Before we turn to the proof of Proposition~\ref{prop:sfbm_increments} we make some further observations.

\begin{lemma}\label{lem:uniform_sample_on_sphere}
  Let $U$ be a point sampled uniformly at random from $\mathbb S^{d-1}$. Then \linebreak $(\varphi_1(U), \ldots, \varphi_{d-2}(U))$ and $\theta(U)$ are independent, and $\theta(U)$ is uniformly distributed on $(0, 2\pi)$.
\end{lemma}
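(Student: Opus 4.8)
The plan is to derive both conclusions at once from the rotational invariance of the uniform law on $\mathbb S^{d-1}$ under the one-parameter subgroup of $SO(d)$ that rotates only the last two Cartesian coordinates. For $\alpha\in\mathbb R$ let $R_\alpha\in SO(d)$ fix $x_1,\dots,x_{d-2}$ and rotate the block $(x_{d-1},x_d)$ by angle $\alpha$. Reading off~\eqref{eq:def_spherical_coordinates}, the coordinates $x_1,\dots,x_{d-2}$ together with $r$ determine $\varphi_1(x),\dots,\varphi_{d-2}(x)$ (solve successively for $\cos\varphi_k$), while $(x_{d-1},x_d)=\rho(\sin\theta,\cos\theta)$ with $\rho\coloneqq r\prod_{j=1}^{d-2}\sin\varphi_j=\sqrt{x_{d-1}^2+x_d^2}$; hence $R_\alpha$ leaves $r$ and every $\varphi_j$ unchanged and merely translates $\theta$ on $\mathbb R/2\pi\mathbb Z$. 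Since $R_\alpha$ preserves the law of $U$, the joint law $\mu$ of $\big((\varphi_1(U),\dots,\varphi_{d-2}(U)),\,\theta(U)\big)$ on $[0,\pi)^{d-2}\times\mathbb R/2\pi\mathbb Z$ is invariant under the map $(\varphi,\theta)\mapsto(\varphi,\theta-\alpha)$ for every $\alpha$.

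The second step is the elementary observation that any such $\mu$ must be the product of its first marginal with the uniform law on the circle. For bounded measurable $f$ on $[0,\pi)^{d-2}$ and $g$ on $\mathbb R/2\pi\mathbb Z$, the invariance gives $\Ex[f(\varphi(U))g(\theta(U))]=\Ex[f(\varphi(U))g(\theta(U)-\alpha)]$ for all $\alpha$; averaging this identity over $\alpha\in[0,2\pi)$ against normalised Lebesgue measure and applying Fubini replaces $g$ by its mean $\bar g\coloneqq(2\pi)^{-1}\int_0^{2\pi}g(\beta)\,d\beta$, so that $\Ex[f(\varphi(U))g(\theta(U))]=\bar g\,\Ex[f(\varphi(U))]$. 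Taking $f\equiv 1$ shows $\theta(U)$ is uniform on $(0,2\pi)$, and substituting this back gives $\Ex[f(\varphi(U))g(\theta(U))]=\Ex[f(\varphi(U))]\,\Ex[g(\theta(U))]$, i.e.\ the claimed independence. The a.s.\ well-definedness of the angles causes no trouble, since the configurations with $x_{d-1}^2+x_d^2=0$ or some $\sin\varphi_j=0$ form a $\sigma^{d-1}$-null set; and the degenerate case $d=2$ is covered as well, the independence statement then being vacuous.

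A purely computational alternative would be to push $\sigma^{d-1}$ forward through the spherical-coordinate chart: its density with respect to $d\varphi_1\cdots d\varphi_{d-2}\,d\theta$ is proportional to $\prod_{j=1}^{d-2}\sin^{\,d-1-j}\varphi_j$, which is free of $\theta$, so after normalisation the density factorises with a constant $\theta$-factor. I expect the only mildly delicate point in either route to be bookkeeping rather than substance: in the invariance argument, verifying that $R_\alpha$ is exactly the block rotation fixing each $\varphi_j$ and translating $\theta$; in the Jacobian argument, recalling the correct spherical volume element. I would present the rotational-invariance proof as the main one, since it matches the elementary spirit of the paper and needs no explicit Jacobian.
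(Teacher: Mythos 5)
Your proof is correct, and it is a genuinely self-contained variant of the paper's argument rather than a copy of it. The paper also isolates the rotation in the $(x_{d-1},x_d)$-plane as the key symmetry, but it reaches the conclusion through the representation $U=_d X/\inorm{X}$ with $X$ standard Gaussian: uniformity of $\theta$ comes from the rotational invariance of the planar Gaussian $(X_{d-1},X_d)$, and independence from the way the angles split across blocks of coordinates. Your route stays entirely on the sphere: you observe that the block rotation $R_\alpha$ preserves the uniform law, fixes every $\varphi_j$, and translates $\theta$ on $\mathbb R/2\pi\mathbb Z$, and you then extract both conclusions simultaneously from the elementary fact that a law on a product with a circle factor that is invariant under all circle translations must be the product of its first marginal with normalised Haar measure (your averaging/Fubini step). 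This buys you something: it sidesteps a slightly delicate point in the Gaussian route, namely that $\varphi_1,\ldots,\varphi_{d-2}$ as functions of $x$ also involve $r$, so that for the Gaussian vector one must really argue that $\theta(X)$ is independent of $\bigl(X_1,\ldots,X_{d-2},\sqrt{X_{d-1}^2+X_d^2}\bigr)$; your direct invariance argument never needs to track which Cartesian coordinates each angle depends on, only that $R_\alpha$ fixes the $\varphi_j$ and shifts $\theta$. The one bookkeeping point you correctly flag — that the angles are well defined off a $\sigma^{d-1}$-null set, and that $R_\alpha$ acts on $\theta$ as a translation modulo $2\pi$ for your chosen orientation — is indeed all that remains, so the proof stands as written.
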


\begin{proof}
  Fix some arbitrary $x\in\mathbb R^d$. Notice from the definition of spherical coordinates in Equations~\eqref{eq:def_spherical_coordinates} that the angles of $x$ and $cx$ agree for any $c>0$, i.e.
    \begin{align*}
      \varphi_k(x) &= \varphi_k\left (cx\right ),\qquad 1\leq k\leq d-2,\quad \text{and}\quad
      \theta(x) = \theta\left (cx\right ).
    \end{align*}
  Moreover, the angles $\varphi_1, \ldots, \varphi_{d-2}$ depend on $x_1, \ldots, x_{d-2}$, but not on $x_{d-1}, x_d$. The projection of $x$ onto the hyperplane $x_1=\cdots=x_{d-2}=0$ has distance $\sqrt{x_{d-1}^2+x_d^2}=r\prod_{j=1}^{d-2}\sin\varphi_j$ from the (Euclidian) origin by the Pythagorean identity $\sin^2 \varphi+\cos^2 \varphi =1$, and since $\sin\varphi\geq 0$ for $\varphi\in [0, \pi)$. Consequently, $\sin\theta=x_{d-1}/\sqrt{x_{d-1}^2+x_d^2}$, and therefore $\theta$ only depends on $x_{d-1}$ and $x_d$. Recall now that $U=_d X/\inorm X$ with $X=(X_1, \ldots, X_d)$ having i.i.d.~standard Gaussian coordinates. This shows that $(\varphi_1(U), \ldots, \varphi_{d-2}(U))=_d (\varphi_1(X), \ldots, \varphi_{d-2}(X))$ and $\theta(U)=_d\theta(X)$ are independent. Moreover, $\theta$ is the angle enclosed by the positive $x_d$-axis and the line through the origin and the projection $(0, \ldots, 0, x_{d-1}, x_d)$ of $x$ onto the hyperplane $x_1=x_2=\cdots=x_{d-2}=0$. Since the distribution of $(X_{d-1}, X_d)$ is invariant under rotations in the plane, $\theta(X)$ is uniformly distributed on $(0, 2\pi)$.
\end{proof}

The last lemma allows us to show that the (geodesic) distances between consecutively ordered i.i.d~uniformly distributed points on $\mathbb S^{d-1}$ are exchangeable. Recall that a finite sequence $(Y_1, \ldots, Y_m)$ of r.v.s is called exchangeable if for any permutation $\pi$ the random vectors $(Y_1, \ldots, Y_m)$ and $(Y_{\pi(1)}, \ldots, Y_{\pi(m)})$ have the same distribution. Clearly, if $(Y_1, \ldots, Y_m)$ is exchangeable than it is also cyclically exchangeable.

\begin{proposition}\label{prop:cyclically_exchangeable_distances}
  Fix $m\in\mathbb N$. Let $U_1, \ldots, U_m$ be a sequence of i.i.d.~r.v.s with uniform distribution on $\mathbb S^{d-1}$. Then the random vector of geodesic distances
  \begin{align*}
    \left ( d(U_{m:k}, U_{m:k-1})\right )_{k=1}^{m+1}
  \end{align*}
between consecutive order statistics $U_{m:0}, U_{m:1}, \ldots, U_{m:m+1}$ is exchangeable, where $U_{m:0}\coloneqq U_{m:m+1}\coloneqq O\in\mathbb S^{d-1}$ is a fixed (deterministic) point with $\theta(O)=0$.
\end{proposition}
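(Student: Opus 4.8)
The plan is to split each sample point into its last spherical angle $\theta$ and the remaining angles, to exploit the independence and the classical exchangeability of circular uniform spacings for the $\theta$-part, and only then to pay for the deterministic base point $O$ using rotational symmetries of the sphere. First I would use Lemma~\ref{lem:uniform_sample_on_sphere} to encode each $U_i$ through $\Theta_i\coloneqq\theta(U_i)$, uniform on $(0,2\pi)$, and $\Phi_i\coloneqq(\varphi_1(U_i),\ldots,\varphi_{d-2}(U_i))$, with $\Theta_i$ independent of $\Phi_i$; since the $U_i$ are i.i.d., the family $(\Theta_i)_i$ is independent of $(\Phi_i)_i$. As already noted in the text, the order statistics $U_{m:1}\le\cdots\le U_{m:m}$ are a.s.\ the $U_i$ re-sorted by increasing $\theta$, through a permutation that depends on the $\Theta_i$ only. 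Writing $L_k\coloneqq\theta(U_{m:k})-\theta(U_{m:k-1})$ for $k=1,\ldots,m+1$ with $\theta(U_{m:0})=\theta(O)=0$ and $\theta(U_{m:m+1})=2\pi$, the vector $(L_1,\ldots,L_{m+1})$ is exactly the family of $m+1$ spacings produced by $m$ i.i.d.\ uniform points on the circle of circumference $2\pi$, which is exchangeable by a classical computation; moreover, conditionally on $(L_1,\ldots,L_{m+1})$, the vectors $\Psi_k\coloneqq(\varphi_1(U_{m:k}),\ldots,\varphi_{d-2}(U_{m:k}))$, $k=1,\ldots,m$, are i.i.d.\ copies of $\Phi_1$ and independent of the spacings, while $\Psi_0=\Psi_{m+1}$ is the fixed $\varphi$-vector of $O$.

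Next I would record that the geodesic distance is, in these coordinates, insensitive to a common rotation of the $\theta$-coordinates: a rotation in the $(x_{d-1},x_d)$-plane is an isometry of $\mathbb S^{d-1}$ that shifts every $\theta$ by a constant and fixes every $\varphi$-vector, and since $\cos d(s,t)=\langle s,t\rangle$ one checks that $d(s,t)$ depends only on the two $\varphi$-vectors and on $\cos(\theta(s)-\theta(t))$. Hence $d(U_{m:k},U_{m:k-1})=D(\Psi_{k-1},\Psi_k;L_k)$ for a fixed measurable $D$, and, conditionally on the spacings, the vector we must show exchangeable is a string of consecutive ``link values'' along the closed chain $O=V_0,V_1,\ldots,V_m,V_{m+1}=O$ whose interior vertices carry i.i.d.\ $\varphi$-labels and whose links carry the exchangeable spacings $L_k$.

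To conclude I would try to realise each transposition of the $m+1$ links by a distribution-preserving transformation of $(U_1,\ldots,U_m)$: reflecting a sub-arc of the $\theta$-circle swaps two neighbouring spacings while fixing the others, and together with the reflection $\theta\mapsto-\theta$ of the whole circle this yields the ``obvious'' symmetries of the circular configuration (for $m\le 2$ these already exhaust $S_{m+1}$). The genuinely delicate point---and the step I expect to be the main obstacle---is passing from these to full invariance under $S_{m+1}$ when $m\ge 3$, because $O$ is deterministic with a fixed $\varphi$-vector and $D$ couples adjacent vertices. The route I would attempt is: first use invariance of the uniform law under rotations of the $\mathbb R^{d-2}$-block (isometries that fix $\theta$, hence the whole $\theta$-order, and preserve distances) to reduce the dependence of the law of the distance vector on $O$ to a dependence on $\rho(O)\coloneqq\prod_{j=1}^{d-2}\sin\varphi_j(O)$ alone; then use a hat-box/Cavalieri-type identity for inner products $\langle V,W\rangle$ of a uniform point $W$ against an independent unit vector $V$ to try to kill even that dependence, allowing $O$ to be replaced by an independent uniform point, so that $O,U_{m:1},\ldots,U_{m:m}$ become i.i.d.\ up to the $\theta$-sorting and full exchangeability of the $m+1$ cyclic distances follows by relabelling the i.i.d.\ points. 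Checking that this replacement is actually legitimate---equivalently, that the joint law of the distance vector really is insensitive to the latitude of $O$---is the crux, and is where I would expect the bulk of the work to lie.
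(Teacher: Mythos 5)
Your set-up coincides with the paper's: both arguments reduce the claim, via Lemma~\ref{lem:uniform_sample_on_sphere} and the identity $\cos d(s,t)=s\cdot t$, to the observation that $\cos d(U_{m:k},U_{m:k-1})$ is a fixed symmetric function of the two $\varphi$-vectors $\Psi_{k-1},\Psi_k$ and of $\cos(L_k)$, where $\Psi_1,\ldots,\Psi_m$ are i.i.d.\ (with $\Psi_0=\Psi_{m+1}$ the fixed $\varphi$-vector of $O$) and $(L_1,\ldots,L_{m+1})$ are the exchangeable Dirichlet spacings, independent of the $\Psi_k$. The paper concludes exchangeability directly at this point from these ingredients; you instead correctly flag the passage to full $S_{m+1}$-invariance of the resulting closed chain as the remaining crux --- and you do not close it. As written, the proposal is therefore incomplete.

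Moreover, the route you sketch for closing the gap contains a step that fails. Even if you legitimately replace $O$ by an independent uniform point, so that all $m+1$ points are i.i.d., full exchangeability does \emph{not} ``follow by relabelling the i.i.d.\ points'': any relabelling is undone by the $\theta$-sorting, so the vector of consecutive distances of a given configuration is determined only up to cyclic shift and reversal; relabelling therefore yields invariance under the dihedral group of the $(m+1)$-cycle, which equals $S_{m+1}$ only for $m\le 2$ (as you yourself note). The abstract structure you reduce to --- link values $D(\Psi_{k-1},\Psi_k;L_k)$ around a cycle with i.i.d.\ vertex marks and independent exchangeable edge marks --- is genuinely not exchangeable in general: for i.i.d.\ real marks $X_0,\ldots,X_3$ on a $4$-cycle and $D_k=X_{k-1}+X_k+L_k$ one has $\Cov(D_1,D_2)=\Var(X_1)+\Cov(L_1,L_2)\neq \Cov(L_1,L_3)=\Cov(D_1,D_3)$, so adjacent and opposite links cannot be transposed. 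Any correct completion must therefore use the specific form of the spherical inner product and of the spacing law, which is exactly the content you leave unsupplied (your hat-box/Cavalieri step is unverified, and would in any case only remove the dependence on $O$, not produce the missing transpositions). The reflection-of-a-sub-arc idea suffers from a related defect: reflecting $\theta(V_k)$ inside the arc $(\theta(V_{k-1}),\theta(V_{k+1}))$ swaps $L_k$ and $L_{k+1}$ but leaves the vertex pairs $(\Psi_{k-1},\Psi_k)$ and $(\Psi_k,\Psi_{k+1})$ attached to their original positions, so it does not realise the transposition of the two distance coordinates.
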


\begin{proof}
  We make use of the fact that the geodesic distance $d(x, x')$ between two points $x, x'\in\mathbb S^{d-1}$ satisfies $\cos d(x, x')= x\cdot x'$, where $x\cdot x'\coloneqq \sum_{k=1}^d x_kx'_k$ denotes the scalar product of $x$ and $x'$, cf.~\cite[p.~141--142]{Berger2010}. Thus it suffices to show that $(U_{m:k}\cdot U_{m:k-1})_{k=1}^{m+1}$ is exchangeable. Now, denoting by $(x)_\ell$ the $\ell$-th component of $x$, by definition of the scalar product,
  \begin{align*}
    U_{m:k}\cdot U_{m:k-1} =& \sum_{\ell=1}^d (U_{m:k})_\ell(U_{m:k-1})_\ell,
  \end{align*}
  and by the implicit definition of spherical coordinates, Equations~\eqref{eq:def_spherical_coordinates}, the last term equals    
    \begin{align*}
    =& \sum_{\ell=1}^{d-2} \cos\varphi_\ell(U_{m:k})\cos\varphi_\ell(U_{m:k-1})\prod_{j=1}^{\ell-1}\sin\varphi_j(U_{m:k})\sin\varphi_j(U_{m:k-1}) \\
    &+ \left ( \sin\theta(U_{m:k})\sin\theta(U_{m:k-1})+\cos\theta(U_{m:k})\cos\theta(U_{m:k-1})\right )\prod_{j=1}^{d-2}\sin\varphi_j(U_{m:k})\sin\varphi_j(U_{m:k-1})\\
    =& \sum_{\ell=1}^{d-2} \cos\varphi_\ell(U_{m:k})\cos\varphi_\ell(U_{m:k-1})\prod_{j=1}^{\ell-1}\sin\varphi_j(U_{m:k})\sin\varphi_j(U_{m:k-1})\\
      & + \cos(\theta(U_{m:k})-\theta(U_{m:k-1}))\prod_{j=1}^{d-2}\sin\varphi_j(U_{m:k})\sin\varphi_j(U_{m:k-1}),
  \end{align*}
  where we used the identity $\cos(\varphi-\varphi')=\cos \varphi\cos \varphi'+\sin \varphi\sin \varphi'$ in the last equation.
 By Lemma~\ref{lem:uniform_sample_on_sphere} $(\theta(U_{k}))_{k=1}^m$ is an i.i.d.\ sequence of uniform $(0, 2\pi)$ r.v.s. Consequently, the gaps $(\theta(U_{m:k})-\theta(U_{m:k-1}))_{k=1}^{m+1}$ rescaled by $1/(2\pi)$ obey a Dirichlet distribution with all parameters equal to one, cf.\ Theorem V.2.2 in \cite{Devroye1986}. In particular, the gaps are exchangeable. Since $(\varphi_1(U_{m:k}), \ldots, \varphi_{d-2}(U_{m:k}))_{k=1}^{m}$ is an i.i.d.\ sequence of random variables 
 and again by Lemma~\ref{lem:uniform_sample_on_sphere} and the expansion of the dot product in the last display we see that $(U_{m:k}\cdot U_{m:k-1})_{k=1}^m$ is exchangeable.
\end{proof}

We are now ready to show Proposition~\ref{prop:sfbm_increments}.

\begin{proof}[Proof of Proposition~\ref{prop:sfbm_increments}]
Set $d_H(s, t)\coloneqq (d(s, t))^{2H}$, and define the function \linebreak $\tilde c\colon (\mathbb S^{d-1})^4\to \mathbb R$ by 
\begin{align*}
  \tilde c (s, s', t, t') &\coloneqq \Cov(X_{s'}-X_s, X_{t'}-X_t) \\
  &= \Cov(X_{s'}, X_{t'})-\Cov(X_{s'}, X_{t})-\Cov(X_s, X_{t'})+\Cov(X_s, X_t)\\
  &= c(s', t')-c(s', t)-c(s, t')+c(s, t)\\
  &= \frac 1 2 \left (d_H(s', t)+d_H(s, t')-d_H(s', t')-d_H(s, t) \right ),
\end{align*}
where the covariance function of $X$,
\begin{align*}
  c(s, t) &= \frac{1}{2}(d_H(O, s)+d_H(O, t)-d_H(s, t)),\qquad s, t\in \mathbb S^{d-1},
\end{align*}
can be computed from~\eqref{eq:sfbm_increment}. Conditionally given $U\coloneqq (U_1, \ldots, U_m)$ the random vector of increments $(X_{U_{m:k}}-X_{U_{m:k-1}})_{k=1}^{m+1}$ has characteristic function
  \begin{align}
    \Ex\left [\exp \left (i\sum_{k=1}^{m+1} s_k \left (X_{U_{m:k}}-X_{U_{m:k-1}}\right )  \right )\Big|\, U  \right ] &= \exp\left ( -\frac{1}{2}s^\intercal Rs\right)\qquad (s\in\mathbb R^{m+1}),
  \end{align}
where $R=(R_{ij})$ is the $(m+1)\times (m+1)$ covariance matrix (a random matrix, as it depends on $(U_1,\ldots,U_m)$) defined by $R_{ij}\coloneqq \tilde c(U_{m:i}, U_{m:i-1}, U_{m:j}, U_{m:j-1})$.
Let $\pi\in\Cyc(m+1)$ be an arbitrary but fixed cyclic permutation. If we can show that the matrices $(R_{ij})_{i, j\in \{1,\ldots,m+1\}}$ and $(R_{\pi(i)\pi(j)})_{i, j\in \{1,\ldots,m+1\}}$ have the same distribution, the claim is proved. Assume without loss of generality that $i\leq j-1$. Notice that by definition of $\tilde c$
\begin{align*}
  &\quad R_{ij} \\
  &= \frac{1}{2}\left ( d_H(U_{m:i-1}, U_{m:j})+d_H(U_{m:i}, U_{m:j-1})-d_H(U_{m:i-1}, U_{m:j-1})-d_H(U_{m:i}, U_{m:j})\right)  \\
  &=_d \frac{1}{2}\bigg ( d_H(U_{m:\pi(i)-1}, U_{m:\pi(j)})+d_H(U_{m:\pi(i)}, U_{m:\pi(j)-1})\\
  &\qquad -d_H(U_{m:\pi(i)-1}, U_{m:\pi(j)-1}) -d_H(U_{m:\pi(i)}, U_{m:\pi(j)})\bigg )  \\
  &= R_{\pi(i)\pi(j)},
\end{align*}
where we applied Proposition~\ref{prop:cyclically_exchangeable_distances} in the second equality. The identity in the last math display implies
 that $(R_{ij})_{i, j\in \{1,\ldots,m+1\}}$ has the same distribution as $(R_{\pi(i)\pi(j)})_{i, j\in \{1,\ldots,m+1\}}$ and thus the claim follows.
\end{proof}

The last ingredient in our derivation of the uniform distribution of the occupation time of spherical fBM, Theorem~\ref{thm:sfbm_sojourn_time}, is a fluctuation result on random walk bridges.
We construct a random walk bridge $S_0= 0, S_1, \ldots, S_m, S_{m+1}=0$ from the (cyclically exchangeable) increments in~\eqref{eq:sfbm_increments} by setting
\begin{align*}
  S_k &\coloneqq \sum_{\ell=1}^{k} (X_{U_{m:\ell}}-X_{U_{m:\ell-1}}) = X_{U_{m:k}},\qquad 1\leq k\leq m+1.
\end{align*}
The event $\{ X_{U_1}>0, \ldots, X_{U_m}>0  \}=\{X_{U_{m:1}}>0, \ldots, X_{U_{m:m}}>0\}$ may now be viewed as the event $\{S_1>0, \ldots, S_m>0\}$, i.e.\ that $(S_k)_{k=0}^{m+1}$ is positive (except for its two endpoints $S_0=S_{m+1}=0$). The fluctuation result on random walk bridges (with cyclically exchangeable increments) will be formulated quite generally and may be of independent interest. We stress that a similar result by Sparre Andersen \cite{Andersen1953b}, cf.~the exposition in~\cite[Section 1.3]{KabluchkoVysotskyZaporozhets2019}, is not sufficient for our purposes, as it assumes exchangeable increments rather than only \textit{cyclically} exchangeable increments. We refer to \cite{BergerBethencourt2023preprint} for similar results, but they do not apply to bridges, as needed in our case. Our notation partly follows the exposition in~\cite{KabluchkoVysotskyZaporozhets2019}.

Fix $m\in\mathbb N$. Let $\xi_1, \ldots, \xi_m$ be real r.v.s. Define the partial sums $(S_k)_{k=0}^m$ by
\begin{align*}
  S_0\coloneqq 0, \qquad S_k &\coloneqq \xi_1+\cdots+\xi_k,\qquad 1\leq k\leq m.
\end{align*}
We impose the following assumptions on the increments $\xi_1, \ldots, \xi_m$:
\begin{enumerate}[i)]
  \item Bridge property: $S_m=\xi_1+\ldots+\xi_m=0$ a.s.
  \item Cyclic exchangeability: For every cyclic permutation $\pi\in\Cyc(m)$ of $\{1, \ldots, m\}$ we have the distributional identity
    \begin{align*}
      (\xi_1, \ldots, \xi_m) =_d (\xi_{\pi(1)}, \ldots, \xi_{\pi(m)}).
    \end{align*}
  \item For any $1\leq k\leq m-1$ the distribution of $S_k$ has no atoms.
\end{enumerate}
We call $(S_k)_{k=0}^m$ a random walk bridge with cyclically exchangeable increments.

\begin{proposition}\label{prop:uniform_law_random_bridges}
Fix $m\in\mathbb N$. Let $(S_k)_{k=0}^m$ be a random walk bridge with cyclically exchangeable increments such that $S_k$ has no atoms for any $1\leq k\leq m$, then
  \begin{align*}
    \Prob{S_1>0, \ldots, S_{m-1}>0 } &= \frac{1}{m}.
  \end{align*}
\end{proposition}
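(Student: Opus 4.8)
The plan is a cyclic-shift (``cycle lemma'') argument: one shows that among the $m$ rotations of the bridge exactly one is strictly positive on $\{1,\dots,m-1\}$, and then converts this deterministic count into a probability using cyclic exchangeability. As a preliminary I would record what atomlessness buys us. For $0\le i<j\le m-1$ the block $S_j-S_i=\xi_{i+1}+\cdots+\xi_j$ is a sum of $j-i$ consecutive increments, so by cyclic exchangeability it has the same law as $\xi_1+\cdots+\xi_{j-i}=S_{j-i}$, which is atomless because $1\le j-i\le m-1$. Hence $S_0=0,S_1,\dots,S_{m-1}$ are a.s.\ pairwise distinct, and there is an a.s.\ unique index $\tau\in\{0,1,\dots,m-1\}$ at which $\min_{0\le i\le m-1}S_i$ is attained.

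Next I would introduce the rotated bridges. Extend the increments periodically, $\xi_{i+m}\coloneqq\xi_i$, so that $S$ extends via $S_{k+m}=S_k$ (legitimate since $S_m=0$), and for $j\in\{0,1,\dots,m-1\}$ set $S^{(j)}_k\coloneqq S_{j+k}-S_j$ for $0\le k\le m$. Each $(S^{(j)}_k)_{k=0}^m$ is again a bridge (its increments are the cyclic shift $(\xi_{j+1},\dots,\xi_{j+m})$ of $(\xi_1,\dots,\xi_m)$). As $k$ runs over $1,\dots,m-1$, the residue $(j+k)\bmod m$ runs over $\{0,\dots,m-1\}\setminus\{j\}$, so the interior values of $S^{(j)}$ are exactly $\{S_\ell-S_j:\ell\in\{0,\dots,m-1\}\setminus\{j\}\}$, whence
\begin{align*}
  \{S^{(j)}_1>0,\dots,S^{(j)}_{m-1}>0\}
  \;=\;\{\,S_j<S_\ell\ \text{for all }\ell\in\{0,\dots,m-1\}\setminus\{j\}\,\}
  \;=\;\{\tau=j\}.
\end{align*}
Summing over $j$ gives the deterministic (a.s.) identity
\begin{align*}
  \sum_{j=0}^{m-1}\indicator{S^{(j)}_1>0,\dots,S^{(j)}_{m-1}>0}=1\qquad\text{a.s.}
\end{align*}

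Finally I would take expectations. For each $j$ the passage from $(\xi_1,\dots,\xi_m)$ to $(\xi_{j+1},\dots,\xi_{j+m})$ is a cyclic permutation, so cyclic exchangeability yields $(S^{(j)}_k)_{k=0}^m=_d(S_k)_{k=0}^m$, and in particular $\Prob{S^{(j)}_1>0,\dots,S^{(j)}_{m-1}>0}=\Prob{S_1>0,\dots,S_{m-1}>0}$ for every $j$. Taking expectations in the displayed identity then gives $1=m\,\Prob{S_1>0,\dots,S_{m-1}>0}$, which is the claim. The index bookkeeping with the cyclic shifts is routine; the one point that genuinely needs care is the equivalence in the second step — that among the $m$ rotations of a bridge \emph{exactly one} is strictly positive on its interior — and this is precisely where atomlessness is indispensable, since it forces the running minimum over $\{0,\dots,m-1\}$ to be attained at a unique index (without it the count on the left could be $0$, destroying the argument).
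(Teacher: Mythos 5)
Your proof is correct and follows essentially the same route as the paper's: both are the cycle-lemma argument that identifies the event that the $j$-th cyclic rotation of the bridge is strictly positive on its interior with the event that the minimum of $S_0,\dots,S_{m-1}$ is attained (uniquely, by atomlessness) at index $j$, and then uses cyclic exchangeability to conclude that each of these $m$ events has the same probability, summing to one. The only cosmetic difference is that the paper phrases this as $\Prob{\tau=j}=\Prob{\tau=0}$ for each $j$ rather than summing indicators before taking expectations.
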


\begin{proof} 
Define $\tau:=\min\{ j \in\{ 0,\ldots,m-1\} : S_j = \min_{k\in\{ 0,\ldots, m-1\}} S_k\}$ to be the index where the minimum of $S_0=0, S_1,\ldots, S_{m-1}$ is attained. 
We show that $\Prob{\tau=j}=\Prob{\tau=0}$ for all $j=0,1,\ldots,m-1$. Since $1=\sum_{j=0}^{m-1} \Prob{\tau=j}$, this will imply $\Prob{\tau=0}=\frac{1}{m}$. Noting further that $\Prob{\tau=0}=\Prob{S_1>0,\ldots,S_{m-1}>0}$ we will have proved our claim. In order to prove $\Prob{\tau=j}=\Prob{\tau=0}$ we will use the cyclic permutation $\pi$ given by 
$$
\pi(i):=\begin{cases} 
 i+j &: i=1,\ldots,m-j, \\ i-m+j &: i=m-j+1,\ldots,m.
\end{cases}
$$
Note that by cyclic exchangeability
$$
\Prob{\tau=0} = \Prob{ 0 < \sum_{i=1}^k \xi_i , k=1,\ldots,m-1} = \Prob{ 0 < \sum_{i=1}^k \xi_{\pi(i)} , k=1,\ldots,m-1}.
$$
We are going to analyse the conditions $0 < \sum_{i=1}^k \xi_{\pi(i)}$, $k=1,\ldots,m-1$, and see that they are equivalent to the event $\{\tau=j\}$.
Indeed, first note that for $k=1,\ldots,m-j-1$
$$
0 < \sum_{i=1}^k \xi_{\pi(i)} = \sum_{i=1}^k \xi_{i+j} = \sum_{i=j+1}^{k+j} \xi_{i} = S_{k+j}-S_j.
$$
This means that $S_j<S_\ell$ for all $\ell=j+1,\ldots,m-1$.
Second, note that for $k=m-j,\ldots,m-1$
\begin{eqnarray*}
0 &<& \sum_{i=1}^k \xi_{\pi(i)} =  \sum_{i=1}^{m-j} \xi_{\pi(i)}+ \sum_{i=m-j+1}^k \xi_{\pi(i)} =  \sum_{i=1}^{m-j} \xi_{i+j}+ \sum_{i=m-j+1}^k \xi_{i-m+j} \\
  &=&  \sum_{i=j+1}^{m} \xi_{i}+ \sum_{i=1}^{k-m+j} \xi_{i}=S_m - S_j + S_{k-m+j} = 0 - S_j + S_{k-m+j}.
\end{eqnarray*}
This means that $S_j < S_\ell$ for all $\ell=0,\ldots,j-1$. \end{proof}

We are now ready to prove Theorem~\ref{thm:sfbm_sojourn_time}.

\begin{proof}[Proof of Theorem~\ref{thm:sfbm_sojourn_time}]
  Recall that the uniform distribution on $(0, 1)$ has moment sequence $\int_0^1 x^m dx=\frac{1}{m+1}$, $m\in\mathbb N$. For this reason, according to Proposition~\ref{prop:sampling} it is sufficient to show that $\Prob{X_{U_1}>0, \ldots, X_{U_m}>0}=\frac{1}{m+1}$ for $m\geq 1$. From Proposition~\ref{prop:sfbm_increments} we know that we can view $\{X_{U_1}>0, \ldots, X_{U_m}>0\}=\{X_{U_{m:1}}>0, \ldots, X_{U_{m:m}}>0\}$ as the event $\{S_1>0, \ldots, S_m>0\}$, where $S_1, \ldots, S_{m+1}$ is a random walk bridge with exchangeable increments defined in~\eqref{eq:sfbm_increments}. The claim thus follows from Proposition~\ref{prop:uniform_law_random_bridges}.
\end{proof}

\subsection{Proofs for the results on L\'evy processes and L\'evy bridges}
\subsubsection{L\'evy processes}

For the proof of Theorem~\ref{thm:occupation_time} we rely on some well-known results of Spitzer and some basic facts on Bell polynomials that we now recall. Let $\xi_1, \xi_2, \ldots$ denote a sequence of i.i.d.~real-valued random variables with partial sums $S_n\coloneqq \sum_{k=1}^n \xi_k$, $n\in\mathbb N$. As a consequence of what is now called Spitzer's identity, he obtains the following fact. 

\begin{corollary}[Corollary 2 in~\cite{Spitzer1956}, Theorem 1 in~\cite{Andersen1954}]
The survival probabilities of the partial sums $S_1, S_2, \ldots$ have generating function
\begin{align}\label{eq:Spitzer}
  \sum_{k=0}^\infty t^k \Prob{ S_1\geq 0, \ldots, S_k\geq 0 } &= \exp\left ( \sum_{k=1}^\infty \frac{t^k}{k}\Prob{S_k\geq 0}  \right),\qquad |t|<1.
\end{align}
This identity still holds when the inequalities in~\eqref{eq:Spitzer} are replaced by strict inequalities.
\end{corollary}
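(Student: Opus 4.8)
The plan is to treat \eqref{eq:Spitzer} as the specialisation of Spitzer's identity to survival probabilities and to prove it combinatorially. Abbreviate $p_n\coloneqq\Prob{S_1\ge 0,\ldots,S_n\ge 0}$ (with $p_0\coloneqq 1$) and $q_\ell\coloneqq\Prob{S_\ell\ge 0}$; since $0\le p_n,q_\ell\le 1$, both sides of \eqref{eq:Spitzer} are power series in $t$ that converge for $|t|<1$, so it suffices to compare coefficients. Writing $\prod_{c\in\sigma}$ for the product over the cycles $c$ of a permutation $\sigma$ and $\mathfrak S_n$ for the symmetric group on $\{1,\ldots,n\}$, the exponential formula (cycle-index version) gives
\begin{align*}
  \exp\left(\sum_{k\ge 1}\frac{t^k}{k}\,q_k\right)=\sum_{n\ge 0}\frac{t^n}{n!}\sum_{\sigma\in\mathfrak S_n}\prod_{c\in\sigma}q_{|c|},
\end{align*}
so \eqref{eq:Spitzer} is equivalent to the family of identities $n!\,p_n=\sum_{\sigma\in\mathfrak S_n}\prod_{c\in\sigma}q_{|c|}$, $n\ge 1$ (equivalently, after one differentiation, to the convolution recursion $n\,p_n=\sum_{k=1}^{n}q_k\,p_{n-k}$), which I would aim to establish.

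The next step is to strip away the randomness. Because $\xi_1,\ldots,\xi_n$ are i.i.d., for every $\pi\in\mathfrak S_n$ the reindexed vector $(\xi_{\pi(1)},\ldots,\xi_{\pi(n)})$ has the same law as $(\xi_1,\ldots,\xi_n)$, so $p_n$ equals the probability that all partial sums of $(\xi_{\pi(i)})_{i=1}^n$ are $\ge 0$; summing over $\pi$,
\begin{align*}
  n!\,p_n=\Ex\left[\#\bigl\{\pi\in\mathfrak S_n:\ \text{all partial sums of }(\xi_{\pi(i)})_{i=1}^n\text{ are }\ge 0\bigr\}\right].
\end{align*}
On the other hand, for a fixed $\sigma$ the cycle sums $\sum_{i\in c}\xi_i$ ($c\in\sigma$) are independent with $\sum_{i\in c}\xi_i$ distributed as $S_{|c|}$, whence $\prod_{c\in\sigma}q_{|c|}=\Ex\bigl[\prod_{c\in\sigma}\mathbf 1\{\sum_{i\in c}\xi_i\ge 0\}\bigr]$ and thus $\sum_{\sigma}\prod_{c\in\sigma}q_{|c|}=\Ex\bigl[\sum_{\sigma\in\mathfrak S_n}\prod_{c\in\sigma}\mathbf 1\{\sum_{i\in c}\xi_i\ge 0\}\bigr]$. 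Hence everything reduces to the deterministic claim that, for every $(a_1,\ldots,a_n)\in\mathbb R^n$,
\begin{align*}
  \#\bigl\{\pi\in\mathfrak S_n:\ \text{all partial sums of }(a_{\pi(i)})_{i=1}^n\text{ are }\ge 0\bigr\}=\sum_{\sigma\in\mathfrak S_n}\prod_{c\in\sigma}\mathbf 1\Bigl\{\sum_{i\in c}a_i\ge 0\Bigr\}.
\end{align*}
The identical statement should hold with every ``$\ge$'' replaced by ``$>$'', and that variant delivers, in exactly the same way, the strict-inequality version of \eqref{eq:Spitzer} claimed in the statement.

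It remains to prove this deterministic identity, which I expect to be the main obstacle, as it is essentially Spitzer's combinatorial lemma. Grouping the right-hand side by the underlying set partition, it equals $\sum(|B_1|-1)!\cdots(|B_r|-1)!$ over set partitions $\{B_1,\ldots,B_r\}$ of $\{1,\ldots,n\}$ all of whose block sums are $\ge 0$, the factorials counting the cyclic orders on the blocks. The identity is then obtained by a cycle-lemma argument, most transparently through Foata's fundamental bijection $\mathfrak S_n\to\mathfrak S_n$: writing a permutation in a canonical cycle notation---each cycle opened at a distinguished element, the cycles then listed in a canonical order---and erasing the parentheses is a bijection onto words, and one has to check that, when the distinguished element of a block is taken to be the one at which a cyclic running partial sum of that block first attains its minimum, this bijection carries the permutations all of whose cycles have nonnegative sum onto the words all of whose prefixes are nonnegative. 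Making this weight-preserving correspondence precise---so that ``a new cycle opens'' is synchronised with ``the walk reaches a new prefix minimum''---is the delicate step, and it is exactly the combinatorial content of \cite{Spitzer1956} and \cite{Andersen1954}; at this point I would either invoke their lemma or reproduce its proof. (An alternative, somewhat longer, route sidesteps this identity: first establish the Baxter--Spitzer ladder-epoch factorisation $1-\sum_{k\ge 1}t^k\Prob{\tau_+=k}=\exp\bigl(-\sum_{k\ge 1}\tfrac{t^k}{k}\Prob{S_k>0}\bigr)$, with $\tau_+$ the first strict ascending ladder epoch, then combine it with $\sum_{n\ge 0}t^n\Prob{S_1\le 0,\ldots,S_n\le 0}=\bigl(1-\sum_{k}t^k\Prob{\tau_+=k}\bigr)/(1-t)$ and the substitution $\xi_i\mapsto-\xi_i$; but that factorisation again rests on a cycle-lemma argument.)
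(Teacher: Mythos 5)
The paper offers no proof of this corollary at all --- it is quoted from Spitzer and Sparre Andersen and used as a black box --- so there is no in-paper argument to match yours against. Your surrounding reductions are all correct: the convergence remark, the exponential-formula rewriting of the right-hand side of \eqref{eq:Spitzer} as $\sum_n \frac{t^n}{n!}\sum_{\sigma\in\mathfrak S_n}\prod_{c\in\sigma}q_{|c|}$, the observation that for fixed $\sigma$ the cycle sums are independent with $\sum_{i\in c}\xi_i=_d S_{|c|}$, and the symmetrisation $n!\,p_n=\Ex\bigl[\#\{\pi:\dots\}\bigr]$. In fact your intermediate identity $n!\,p_n=\sum_{\sigma}\prod_{c}q_{|c|}$ is precisely the paper's equation \eqref{eq:survival_probability}, which the authors later derive \emph{from} \eqref{eq:Spitzer} via Bell polynomials; you are running that derivation backwards, which is legitimate and would make the paper more self-contained if completed.

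The genuine gap is the deterministic identity
\begin{align*}
  \#\bigl\{\pi\in\mathfrak S_n:\ \text{all partial sums of }(a_{\pi(i)})_{i=1}^n\text{ are }\ge 0\bigr\}
  \;=\;\sum_{\sigma\in\mathfrak S_n}\prod_{c\in\sigma}\mathbf 1\Bigl\{\sum_{i\in c}a_i\ge 0\Bigr\},
\end{align*}
which you state, correctly identify as the crux, and then explicitly do not prove (``I would either invoke their lemma or reproduce its proof''). That identity \emph{is} the theorem --- everything else is bookkeeping --- so as written the proposal is a reduction to the cited sources rather than a proof, leaving you in essentially the same position as the paper's bare citation. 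If you do carry out the Foata-bijection step, note two points. First, since the corollary assumes nothing about the law of $\xi_1$, the identity must hold for arbitrary reals $(a_1,\dots,a_n)$, including sequences with ties and with block sums equal to zero; the tie-breaking rule for ``the element at which the cyclic partial sum first attains its minimum'' is exactly what makes the weak and the strict versions both come out right, and this is the step that is easy to get wrong. Second, your fallback route via the ladder-epoch factorisation is circular in the sense you already acknowledge: that factorisation is itself proved by the same cycle-lemma argument, so it does not spare you the combinatorial work.
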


Let us rewrite this identity in a more combinatorial form that is better suited for our purposes. To this end we utilize the Bell polynomials. For any two sequences of real numbers $v_\bullet=(v_k)_{k\in\mathbb{N}}$ and $w_\bullet=(w_k)_{k\in\mathbb N}$ let
\[
B_k(v_\bullet, w_\bullet)\coloneqq \sum_{\ell=1}^k v_\ell B_{k,\ell}(w_\bullet),\quad k\in\mathbb N,
\]
denote the $k$-th complete Bell polynomial (associated with $(v_\bullet, w_\bullet)$),
where
\[ B_{k,\ell}(w_\bullet)\coloneqq \sum_{\rho\in{\mathscr P}_{k,\ell}}\prod_{B\in\rho}w_{\card B},\quad 1\leq \ell\leq k,\]
denotes the $(i,\ell)$-th partial Bell polynomial (associated with $w_\bullet$) and ${\mathscr P}_{i,\ell}$ denotes the set of all partitions of $\{1, \ldots, i\}$ that contain exactly $\ell$ blocks. We use the well known fact, cf.~\cite[Equation (1.11)]{Pitman2006}, that for any two sequences $v_\bullet, w_\bullet$ the exponential generating function of the associated complete Bell polynomials is given by
	\begin{align}\label{eq:bell_generating_function}
	\sum_{k=1}^\infty B_{k}(v_\bullet, w_\bullet)\frac{x^k}{k!} = v(w(x)),
	\end{align}
	whenever either of these quantities is well-defined and where $v(x)\coloneqq \sum_{k\geq 1} v_k \frac{x^k}{k!}$ and $w(y)\coloneqq \sum_{k\geq 1}w_k \frac{y^k}{k!}$ denote the exponential generating function of $v_\bullet=(v_k)_{k\in\mathbb{N}}$ and $w_\bullet=(w_k)_{k\in\mathbb{N}}$, respectively. For more information on Bell polynomials, the interested reader is referred to the lecture notes~\cite{Pitman2006}.\smallskip

We will work with the following reformulation of Spitzer's result: 
\begin{corollary}\label{cor:survival_probabilities}
For the survival probability of the sequence of partial sums we obtain
\begin{align}\label{eq:survival_probability}
  \Prob{ S_1\geq 0, \ldots, S_k\geq 0 } &= \frac{1}{k!}\, \sum_{\rho\in{\mathscr P}_k} \prod_{B\in \rho} (\card B-1)!\, \Prob{S_{\card B}\geq 0};
\end{align}
and the identity in~\eqref{eq:survival_probability} still holds when the inequalities are replaced by strict inequalities.
\end{corollary}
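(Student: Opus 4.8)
The plan is to derive Corollary~\ref{cor:survival_probabilities} from the Spitzer identity~\eqref{eq:Spitzer} purely by manipulating exponential generating functions via the Bell polynomial formula~\eqref{eq:bell_generating_function}. First I would set $p_k\coloneqq\Prob{S_1\geq 0,\ldots,S_k\geq 0}$ (with $p_0=1$) and $q_k\coloneqq\Prob{S_k\geq 0}$, and rewrite~\eqref{eq:Spitzer} in the form $\sum_{k\geq 0}t^k p_k=\exp\bigl(\sum_{k\geq 1}\tfrac{t^k}{k}q_k\bigr)$. The right-hand side is $\exp(w(t))$ where $w(t)\coloneqq\sum_{k\geq 1}w_k\tfrac{t^k}{k!}$ with $w_k\coloneqq(k-1)!\,q_k$; and $\exp(y)=v(y)+\text{const}$ with $v(y)=\sum_{k\geq 1}v_k\tfrac{y^k}{k!}$ and $v_k\equiv 1$. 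Hence by~\eqref{eq:bell_generating_function}, $\exp(w(t))=1+\sum_{k\geq 1}B_k(v_\bullet,w_\bullet)\tfrac{t^k}{k!}$.

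Next I would compare coefficients of $t^k$. From the left, the coefficient of $t^k$ is $p_k$; from the right, it is $\tfrac{1}{k!}B_k(v_\bullet,w_\bullet)$. Plugging in the definition $B_k(v_\bullet,w_\bullet)=\sum_{\ell=1}^k v_\ell B_{k,\ell}(w_\bullet)=\sum_{\ell=1}^k B_{k,\ell}(w_\bullet)=\sum_{\rho\in{\mathscr P}_k}\prod_{B\in\rho}w_{\card B}$ (since the $v_\ell$ are all $1$, summing $B_{k,\ell}$ over $\ell$ just removes the block-count constraint and gives a sum over all partitions of $\{1,\dots,k\}$), and substituting $w_{\card B}=(\card B-1)!\,\Prob{S_{\card B}\geq 0}$, yields exactly~\eqref{eq:survival_probability}. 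The strict-inequality version follows verbatim because the strict-inequality version of~\eqref{eq:Spitzer} is already granted in the quoted corollary, and nothing in the argument uses anything beyond the generating function identity.

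There is essentially no hard step here; the proof is a bookkeeping exercise. The one point deserving a line of care is justifying the coefficient extraction: one should note that both sides of~\eqref{eq:Spitzer} are genuine power series in $t$ convergent for $|t|<1$ (the left side has coefficients in $[0,1]$, the right side is the exponential of a convergent series), so equating formal power series is legitimate and term-by-term identification of coefficients is valid. I would also remark explicitly that the only place the identity $v_\ell\equiv 1$ is used is to collapse $\sum_\ell B_{k,\ell}(w_\bullet)$ into the unrestricted partition sum $\sum_{\rho\in{\mathscr P}_k}\prod_{B\in\rho}w_{\card B}$, which is precisely the combinatorial shape wanted for later insertion into the sampling formula. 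I would present the whole computation in two or three displayed lines and then declare the corollary proved.
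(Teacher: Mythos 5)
Your proposal is correct and follows essentially the same route as the paper: both take $v_k\equiv 1$, $w_k=(k-1)!\,\Prob{S_k\geq 0}$, recognize the right-hand side of Spitzer's identity as $v(w(t))+1$ via~\eqref{eq:bell_generating_function}, and compare coefficients to collapse $\sum_\ell B_{k,\ell}(w_\bullet)$ into the unrestricted sum over partitions. The added remark on the legitimacy of coefficient extraction is a fine (if minor) touch not spelled out in the paper.
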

\begin{proof}
    Define the sequences $v_\bullet\coloneqq (v_k)_{k\in\mathbb N}$ and $w_\bullet\coloneqq (w_k)_{k\in\mathbb N}$ by setting 
    \begin{align*}
        v_k= 1\quad\text{and}\quad w_k= (k-1)!\, \Prob{S_k\geq 0},\quad k\geq 1.
    \end{align*}
    With this particular choice for $v_\bullet, w_\bullet$, we find that $v(x)=e^x-1$ and \linebreak $w(x)=\sum_{k=1}^\infty \frac{x^k}{k}\Prob{S_k\geq 0}$ and we can observe that
    the right hand side in~\eqref{eq:Spitzer} equals $v(w(t))+1$. For $v(w(t))$ we can use the expansion~\eqref{eq:bell_generating_function}. Comparing this to the left hand side of~\eqref{eq:Spitzer} we find that
\begin{align*}
  \Prob{ S_1\geq 0, \ldots, S_k\geq 0 } &= \frac{1}{k!}\, B_k(v_\bullet, w_\bullet) 
  \\
  &= \frac{1}{k!}\, \sum_{\ell=1}^k B_{k,\ell}(w_\bullet)
  \\
  &= \frac{1}{k!}\, \sum_{\ell=1}^k \sum_{\rho\in{\mathscr P}_{k,\ell}} \prod_{B\in\rho} w_{\card B}
  \\
    &= \frac{1}{k!}\, \sum_{\rho\in{\mathscr P}_{k}} \prod_{B\in\rho} (\card B-1)!\, \Prob{ S_{\card B} \geq 0},
\end{align*}
as claimed, where we used that ${\mathscr P}_{k}$ is the disjoint union of the ${\mathscr P}_{k,\ell}$ for $\ell=1,\ldots,k$. The claim for strict inequalities follows from the same proof using strict inequalities at all places.
\end{proof}

We are now prepared to prove Theorem~\ref{thm:occupation_time}. The proof idea is similar to our proof of the Brownian (or stable L\'evy) arcsine law, Theorem~\ref{thm:levysfirstarcsinelaw}. The moments are rewritten as persistence probabilities at random times that come from the normalised jump-times of an independent Poisson process. As we are not assuming the scaling property we cannot scale out the terminal time. This forces us to work with a L\'evy process extension of the sampling formula at Poisson times. Combined with the above variant of Spitzer's identity the claim can be deduced.
\begin{proof}[Proof of Theorem~\ref{thm:occupation_time}]

For the proof it is convenient to turn to a variant of the sampling method. Specifically, instead of starting with the $m$-th moment of the sojourn time of $X$ up until time $t$, we focus instead on its Laplace transform. We use the Poisson sampling formula
\begin{align}\label{eqn:F}
    F(q) &\coloneqq \int_0^\infty e^{-qt} \Ex[A_t^m]dt = \frac{m!}{q^{m+1}}\Prob{X_{T_1^{(q)}}>0, \ldots, X_{T_m^{(q)}}>0},\qquad m\in\mathbb N, q>0,
\end{align}
where $(T_k^{(q)})$ denotes the sequence of waiting times in a standard Poisson process of intensity $q>0$ independent of $X$. The formula was for instance used by  \cite{GetoorSharpe1994} but most certainly also appeared elsewhere. Here is a quick proof for completeness. First note that the Markov property of the random walk $X_{T_1^{(q)}}, X_{T_2^{(q)}}, \ldots$ yields
\begin{align*}
    \Prob{X_{T_1^{(q)}}> 0, \ldots, X_{T_m^{(q)}}> 0}
    &= \int_{x_1> 0}\cdots \int_{x_m> 0}P(dx_1)\cdots P(dx_m-x_{m-1})\\
    &=q^m \int_{x_1>0}\cdots \int_{x_m>0} U^q(dx_1)\cdots U^q(dx_m-x_{m-1}),
\end{align*}
where $P(A):=\Prob{X_{T_1^{(q)}}\in A}$ is the jump distribution of the random walk and the quantity $U^q(A):=\int_0^\infty e^{-q t} \Prob{X_t\in A} dt$ is the so-called $q$-potential measure of $X$. Here we used that $P=q U^q$. To rewrite $F$  denote by $\bar X$ the killed process, i.e. the L\'evy process killed at an independent exponential time with parameter $q$. By $\bar p$ denote the transition kernel of the killed process. Then Fubini, monotone convergence, the Markov property, and symmetry of the integrand yield
\begin{align*}
   &\quad \int_0^\infty e^{-q t} \mathbb E[A_t^m]dt\\
   &=\frac{1}{q}\int_0^\infty\cdots \int_0^\infty \mathbb E[\mathbf 1_{\bar X_{s_1}> 0}\cdots \mathbf 1_{\bar X_{s_m> 0}}]ds_m\cdots ds_s \\
    &=\frac{1}{q}\lim_{N_1,...,N_m\to\infty}\int_0^{N_1}\cdots \int_0^{N_m} \mathbb E[\mathbf 1_{\bar X_{s_1}> 0}\cdots \mathbf 1_{\bar X_{s_m> 0}}]ds_m\cdots ds_1 \\
 &=\frac{m!}{q}\lim_{N_1,...,N_m\to\infty}\int_{0}^{N_1}\int_{s_1}^{N_2}\cdots \int_{s_{m-1}}^{N_m} \mathbb E[\mathbf 1_{\bar X_{s_1}> 0}\cdots \mathbf 1_{\bar X_{s_m> 0}}]ds_m\cdots ds_1 \\
  &=\frac{m!}{q}\lim_{N_1,...,N_m\to\infty}\int_{0}^{N_1}\int_{s_1}^{N_2}\cdots \int_{s_{m-1}}^{N_m} \\
    &\qquad\times \int_{x_1>0}\cdots \int_{x_m>0} \bar p_{s_1}(dx_1)\cdots \bar p_{s_m-s_{m-1}}(dx_m-x_{m-1})ds_m\cdots ds_1 \\
    &=\frac{m!}{q}\lim_{N_1,...,N_m\to\infty}\int_{0}^{N_1}\int_{0}^{N_2-s_1}\cdots \int_{0}^{N_m-s_{m-1}} \\
    &\qquad\times \int_{x_1>0}\cdots \int_{x_m>0} \bar p_{s_1}(dx_1)\cdots \bar p_{s_m}(dx_m-x_{m-1})ds_m\cdots ds_1 \\
    &=\frac{m!}{q}
    \int_{x_1>0}\cdots \int_{x_m>0} U^q(dx_1)\cdots U^q(dx_m-x_{m-1}).
\end{align*}
Combining the two previous displays yields \eqref{eqn:F}. 
Combined with  Corollary~\ref{cor:survival_probabilities} we obtain
\begin{align*}
    F(q) &= \frac{1}{q^{m+1}}  \sum_{\rho\in{\mathscr P}_m} \prod_{B\in\rho} (\card B-1)! \, \Prob{S_{\card B}>0},
\end{align*}
with $S_k\coloneqq X_{T_k^{(q)}}$. Since $T_k^{(q)}$ is gamma distributed with parameters $k, q$; i.e.\ with density $s\mapsto q^k s^{k-1}e^{-qs}/(k-1)!\mathbf 1\{ s>0\}$, and setting $p_s\coloneqq \Prob{X_s>0}$ we conclude
\begin{align*}
    F(q) &= \frac{1}{q}  \sum_{\rho\in{\mathscr P}_m} \prod_{B\in\rho} \int_0^\infty s^{\card B-1}p_se^{-qs}ds.
\end{align*}
The integral on the right-hand side is a Laplace transform, and we will denote the Laplace transform of a function $f : [0,\infty) \to \mathbb R$ by $(\mathcal L f)(q):=\int_0^\infty f(s) e^{-qs}d s$. Using basic properties of Laplace transforms we can thus write
\begin{align*}
    F(q) &= \frac{1}{q}  \sum_{\rho\in{\mathscr P}_m} \prod_{B\in\rho} \mathcal L\left (s^{\card B-1}p_s\right )(q)\\
      &= \frac{1}{q}    \mathcal L\left (\sum_{\rho\in{\mathscr P}_m} \conv_{B\in\rho} s^{\card B-1}p_s\right )(q)\\
      &= \mathcal L\left (1\ast \sum_{\rho\in{\mathscr P}_m} \conv_{B\in\rho} s^{\card B-1}p_s\right )(q).
\end{align*}
From this calculation we find
\begin{align*}
  \Ex[A_t^m] &= \left (1\ast\sum_{\rho\in{\mathscr P}_m} \conv_{B\in\rho} s^{\card B-1}p_s\right )(t),
\end{align*}
which is the claim.
\end{proof}

Before we turn to the proof of Corollary~\ref{cor:GS} we provide a helpful lemma.

\begin{lemma}\label{lem:monomials_convolution}
    Fix $m\in\mathbb N$ and positive real numbers $a_1, \ldots, a_m>0.$ Then, for $t>0$,
    \begin{align*}
        \left( \conv_{k=1}^m \left(s^{a_k-1}\right)\right) (t) = \Gamma\left(\sum_{k=1}^m a_k\right)^{-1} \prod_{k=1}^m \Gamma(a_k) \cdot t^{\sum_{k=1}^m a_k-1}.
    \end{align*}
\end{lemma}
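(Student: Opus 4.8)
The plan is to prove the convolution identity for monomials by induction on $m$, using the classical Beta integral as the engine. For $m=1$ the statement reads $s^{a_1-1}\big|_{s=t} = \Gamma(a_1)^{-1}\Gamma(a_1) t^{a_1-1} = t^{a_1-1}$, which is trivially true. For the inductive step, assume the formula holds for $m-1$ factors. Then, writing $b \coloneqq \sum_{k=1}^{m-1} a_k$, the induction hypothesis gives $\left(\conv_{k=1}^{m-1} s^{a_k-1}\right)(s) = \Gamma(b)^{-1}\prod_{k=1}^{m-1}\Gamma(a_k)\cdot s^{b-1}$, so that
\begin{align*}
  \left(\conv_{k=1}^m s^{a_k-1}\right)(t) &= \left( \left(\conv_{k=1}^{m-1} s^{a_k-1}\right) \ast s^{a_m-1}\right)(t) \\
  &= \Gamma(b)^{-1}\prod_{k=1}^{m-1}\Gamma(a_k)\cdot \int_0^t (t-s)^{b-1} s^{a_m-1}\, ds.
\end{align*}

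The key step is then to evaluate $\int_0^t (t-s)^{b-1}s^{a_m-1}\,ds$. Substituting $s = tu$ with $u \in (0,1)$ turns this into $t^{b+a_m-1}\int_0^1 (1-u)^{b-1}u^{a_m-1}\,du = t^{b+a_m-1}\,\mathrm{B}(a_m, b)$, where $\mathrm{B}$ is the Euler Beta function, and the standard identity $\mathrm{B}(a_m,b) = \Gamma(a_m)\Gamma(b)/\Gamma(a_m+b)$ gives the result. Plugging this back in, the $\Gamma(b)$ factors cancel and we obtain $\Gamma\left(\sum_{k=1}^m a_k\right)^{-1}\prod_{k=1}^m \Gamma(a_k)\cdot t^{\sum_{k=1}^m a_k - 1}$, completing the induction. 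One should also note that convolution is associative and commutative on $[0,\infty)$ (at least for locally integrable functions with support in $[0,\infty)$, which monomials with exponent $a_k - 1 > -1$ certainly are, since the integral converges), so the iterated convolution $\conv_{B\in\rho}$ is well-defined regardless of the order of factors.

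There is essentially no serious obstacle here; the only point requiring a little care is integrability near the endpoints. Since each $a_k > 0$, the exponent $a_k - 1$ is $> -1$, so each monomial $s^{a_k-1}$ is integrable on any bounded interval $[0,T]$, and inductively each partial convolution $\conv_{k=1}^{j} s^{a_k-1}$ is a constant times $s^{(\sum_{k\le j} a_k) - 1}$ with exponent again $> -1$, so all the convolution integrals converge absolutely and Fubini applies to justify associativity. I would state this integrability remark briefly and then present the induction as above.
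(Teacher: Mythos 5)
Your proof is correct and follows essentially the same route as the paper: induction on $m$, peeling off one factor and evaluating the resulting integral via the substitution $s=tu$ and the Beta integral $\mathrm{B}(a_m,b)=\Gamma(a_m)\Gamma(b)/\Gamma(a_m+b)$. The added remark on integrability near the endpoints and associativity of the convolution is a harmless (and slightly more careful) supplement, not a different argument.
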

\begin{proof}
    We show the claim by induction on $m$. It is clear that the claim holds for $m=1.$ Assume now the claim is true for some positive integer $m\in\mathbb N.$ Then, using the induction hypothesis in the second equality,
    \begin{align*}
        \left( \conv_{k=1}^{m+1} \left(s^{a_k-1}\right)\right ) (t) &= \left ( \conv_{k=1}^{m} \left(s^{a_k-1}\right) \ast s^{a_{m+1}-1}  \right) (t)\\
        &= \left ( \Gamma\left (\sum_{k=1}^m a_k\right )^{-1} \prod_{k=1}^m \Gamma(a_k) \cdot s^{\sum_{k=1}^m a_k-1} \ast s^{a_{m+1}-1}  \right ) (t)\\
        &= \Gamma \left(\sum_{k=1}^m a_k\right)^{-1} \prod_{k=1}^m \Gamma(a_k)  \int_0^t (t-s)^{\sum_{k=1}^m a_k-1} s^{a_{m+1}-1}ds\\
        &= \Gamma \left(\sum_{k=1}^m a_k\right)^{-1} \prod_{k=1}^m \Gamma(a_k) \cdot  t^{\sum_{k=1}^{m+1} a_k-1}\int_0^1 (1-s)^{\sum_{k=1}^m a_k-1} s^{a_{m+1}-1}ds\\
        &= \Gamma \left(\sum_{k=1}^m a_k\right)^{-1} \prod_{k=1}^m \Gamma(a_k) \cdot t^{\sum_{k=1}^{m+1} a_k-1}\frac{\Gamma(\sum_{k=1}^m a_k)\Gamma(a_{m+1})}{\Gamma(\sum_{k=1}^{m+1} a_k)}\\
        &= \Gamma \left(\sum_{k=1}^{m+1} a_k\right)^{-1} \prod_{k=1}^{m+1} \Gamma(a_k) \cdot t^{\sum_{k=1}^{m+1} a_k-1},
    \end{align*}
where we transformed coordinates in the integral in the fourth equality and used the beta integral in the fifth equality.
\end{proof}

\begin{proof}[Proof of Corollary \ref{cor:GS}]
  Assume that $\Prob{X_t>0}=c\in (0, 1)$ for all $t>0$. By~\eqref{eq:occupation_time} we have
  \begin{align*}
      \Ex[A_t^m] = \sum_{\rho\in{\mathscr P}_m}  \int_0^t \conv_{B\in\rho} \left (u^{\card B-1}\Prob{X_u>0}\right )(s)ds
    = \sum_{\rho\in{\mathscr P}_m} c^{\card\rho} \int_0^t \conv_{B\in\rho} \left (u^{\card B-1}\right )(s)ds
    \end{align*}
    and applying Lemma~\ref{lem:monomials_convolution} the right hand side equals
    \begin{align*}
    &\quad \frac{1}{\Gamma(m)}\sum_{\rho\in{\mathscr P}_m} c^{\card\rho}\prod_{B\in\rho}(\card B-1)! \cdot \int_0^t s^{m-1}ds\\
    &= \frac{t^{m}}{m!}\sum_{\rho\in{\mathscr P}_m} c^{\card\rho}\prod_{B\in\rho}(\card B-1)!\\
    &= \frac{t^{m}}{m!}\sum_{b=1}^m c^b \sum_{\rho\in{\mathscr P}_{m, b}} \prod_{B\in\rho}(\card B-1)!.
  \end{align*}
Notice that $(k-1)!$ is the number of cyclic permutations of $k$ elements, thus \linebreak $\sum_{\rho\in{\mathscr P}_{m, b}} \prod_{B\in\rho}(\card B-1)!$ is the number of permutations of $\{1,\ldots,m\}$ with $b$ blocks, also known as the $(n, b)$-th unsigned Stirling number, which we denote by $\stirling{n}{k}$. 
Recall that the unsigned Stirling numbers may also be written as
\begin{align*}
    \risfac{c}{m} = \sum_{b=0}^m \stirling{m}{k} c^b,
\end{align*}
where $\risfac{c}{m}\coloneqq c(c+1)\cdots (c+m-1)$ and we recall that 
$\stirling{m}{0}=0$ if $m>0$. Putting everything together, we conclude that $\Ex[A_t^m]=t^m\risfac{c}{m}/m!$, which is the $m$-th moment of $tA$ where the distribution of $A$ is the arcsine law on $(0,1)$ with parameter $c$. Since this distribution is uniquely determined by its moments, we are done with the proof of the first implication.\smallskip

To see the opposite implication, assume that $t^{-1} A_t$ is arcsine distributed with parameter $c$. In particular, the first moment has to have the form $\mathbb E[A_t]=c t$ for all $t>0$. Using the first moment formula (\ref{eq:occupation_time_moments}), we know that the positivity function of the L\'evy process is constant.
\end{proof}

\subsubsection{L\'evy bridges} 
We now offer an elementary derivation of the occupation time distribution of L\'evy bridges. In some sense the proof is a simpler version of our proof for the theorem on the occupation time of spherical Brownian motion. Here the proof can rely on exchangeability whereas the spherical situation is more subtle and requires to work with cyclic exchangeability only. The proof is based on the so-called Baxter's combinatorial lemma for permutations of vectors. Fix $n\in\mathbb N$ and $z_1, \ldots, z_n\in\mathbb R^2$. For any permutation $\pi\in\mathfrak \Cyc(m)$ define the corresponding partial sums
We now offer an elementary derivation of the occupation time distribution of L\'evy bridges. In some sense the proof is a simpler version of our proof for the theorem on the occupation time of spherical Brownian motion. Here the proof can rely on exchangeability whereas the spherical situation is more subtle and requires to work with cyclic exchangeability only. The proof is based on the so-called Baxter's combinatorial lemma for permutations of vectors. Fix $n\in\mathbb N$ and $z_1, \ldots, z_n\in\mathbb R^2$. For any permutation $\pi\in\mathfrak \Cyc(m)$ define the corresponding partial sums
\begin{align*}
  s_0[\pi]\coloneqq 0, \,\,\, s_k[\pi]\coloneqq\sum_{\ell=1}^k z_{\pi(\ell)},\quad 1\leq k\leq n.
\end{align*}
For the sake of brevity we set $s_k\coloneqq s_k[id_n]=\sum_{\ell=1}^k z_\ell$ for $0\leq k\leq n$, where $id_n\colon [n]\to [n]$ denotes the identity permutation $id_n(k)=k$. Notice that $s_n[\pi]=\sum_{\ell=1}^n z_{\pi(\ell)}=s_n$ does not depend on $\pi$. Moreover, for any subset $M\subseteq \{1,\ldots,n\}$ let $s_M\coloneqq \sum_{k\in M}z_k$. Following Baxter, we call $z_1, \ldots, z_n$ skew if the fact that $z_M$ and $z_{M'}$ lie on a common line (i.e.~there exists a real $c\neq 0$ such that $z_M=cz_{M'}$) implies $M=M'$. Any point $z\in\mathbb R^2\setminus \{0\}$ together with the origin $0\in\mathbb R^2$ defines a line in the plane containing $0$ and $z$ that divides the plane into two half spaces. We call these the left and right half space (in clockwise orientation when the direction of the line is induced by moving from $0$ to $z$) induced by $z$. Let $H(z)$ denote the left half space induced by $z$ and including the line containing $z$. Then Baxter's combinatorial lemma may be stated as follows.
\begin{lemma}[Baxter's combinatorial lemma, cf.~Lemma 1 in~\cite{Baxter1961}]\label{lem:baxters_lemma}
  Fix $n\in\mathbb N$ and assume that $z_1, \ldots, z_n\in\mathbb R^2$ that are skew. Then
  \begin{align*}
    \card\left \{\pi\in\Cyc(n)\colon \{s_k[\pi]\}_{k=1}^n\subseteq H(s_n) \right \} &=1.
  \end{align*}
  In words, there is precisely one cyclic permutation $\pi$ of $z_1, \ldots, z_n$ such that the corresponding partial sums $s_1[\pi], \ldots, s_n[\pi]$ all lie in the left half space $H(s_n)$ of $s_n$.
\end{lemma}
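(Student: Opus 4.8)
The plan is to reduce this two-dimensional geometric statement to the classical one-dimensional cycle lemma by projecting onto the direction orthogonal to $s_n$. First note that skewness forces $s_n\neq 0$ (otherwise $s_{\{1,\dots,n\}}=0=s_\emptyset$ would violate it), so $H(s_n)$ is well defined. Identifying $\mathbb{R}^2$ with $\mathbb{C}$ and applying a rotation, I may assume that $s_n$ lies on the positive real axis; then the line through $0$ and $s_n$ is the real axis and $H(s_n)$ is one of the two closed half-planes it bounds, say $H(s_n)=\{z\in\mathbb{C}:\operatorname{Im}z\ge 0\}$. (If the orientation convention makes $H(s_n)$ the lower half-plane instead, everything below goes through verbatim with ``$\le$'' in place of ``$\ge$'' and ``maximum'' in place of ``minimum''.) Since $s_n$ is real, $\operatorname{Im}z_1+\dots+\operatorname{Im}z_n=0$, so we are exactly in the situation of a one-dimensional bridge.

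Next, set $\Sigma_0:=0$ and $\Sigma_m:=\operatorname{Im}z_1+\dots+\operatorname{Im}z_m$ for $1\le m\le n$, so that $\Sigma_n=\Sigma_0=0$, and extend $(\Sigma_m)_{m\in\mathbb{Z}}$ by $n$-periodicity. For $j\in\{0,\dots,n-1\}$ let $\pi_j$ denote the cyclic rotation of $(z_1,\dots,z_n)$ that produces $(z_{j+1},\dots,z_n,z_1,\dots,z_j)$ (indices mod $n$). A direct computation, using the $n$-periodic extension of $\Sigma$ to absorb the wrap-around, gives $\operatorname{Im}s_k[\pi_j]=\Sigma_{j+k}-\Sigma_j$ for $1\le k\le n$; in particular $k=n$ yields $0$, reflecting that the endpoint $s_n$ lies on the bounding line of $H(s_n)$ and hence always belongs to $H(s_n)$. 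Therefore the condition $\{s_k[\pi_j]\}_{k=1}^n\subseteq H(s_n)$ is equivalent to $\Sigma_j\le\Sigma_{j+1},\dots,\Sigma_{j+n-1}$; and since $j+1,\dots,j+n-1$ run through every residue mod $n$ except $j$, this says precisely that $\Sigma_j=\min\{\Sigma_0,\dots,\Sigma_{n-1}\}$.

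It remains to show that this minimum is attained at a unique index, which is the one place where skewness is used in full strength. Suppose $\Sigma_i=\Sigma_{i'}$ with $0\le i<i'\le n-1$; then the proper nonempty subset $M:=\{i+1,\dots,i'\}$ satisfies $\operatorname{Im}s_M=0$, so $s_M$ lies on the real axis, i.e.\ $s_M$ is a (possibly zero) real multiple of $s_n$. If $s_M\neq 0$ this says $s_M$ and $s_n$ are collinear through the origin, which by skewness forces $M=\{1,\dots,n\}$ --- impossible, since $M$ is proper; if $s_M=0$ then $s_M=s_\emptyset$ forces $M=\emptyset$ --- impossible, since $M$ is nonempty. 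Hence $\Sigma_0,\dots,\Sigma_{n-1}$ are pairwise distinct, the minimum is attained at a unique $j^\ast$, and $\pi_{j^\ast}$ is the unique cyclic rotation of $z_1,\dots,z_n$ with all partial sums in $H(s_n)$, which is the assertion. \textbf{The main obstacle} is really only the opening reduction: once one recognises that membership in $H(s_n)$ is nothing but a sign condition on imaginary parts after rotating $s_n$ onto the real axis, the rest is the familiar cycle-lemma argument, and the sole delicate point is to confirm that ``skew'' is exactly the non-degeneracy needed to rule out ties among the $\Sigma_i$ (without it the count could exceed $1$, or --- in the variant with strict inequalities --- drop to $0$). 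For comparison, Baxter's original proof proceeds by induction on $n$, merging two of the vectors and then splitting them apart; the projection argument above is shorter and exhibits the role of skewness transparently.
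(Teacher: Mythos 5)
Your proof is correct. Note, however, that the paper does not prove this lemma at all: it states it and cites Baxter's original argument, which proceeds by induction on $n$ (merging two of the vectors and splitting them again). Your rotation-and-projection proof is therefore a genuinely different, self-contained route. Its key point is that after rotating $s_n$ onto the positive real axis, membership of $s_k[\pi]$ in the closed half-plane $H(s_n)$ is exactly the sign condition $\operatorname{Im}s_k[\pi]\ge 0$, so the lemma collapses to the classical cycle lemma for the one-dimensional bridge $\Sigma_k=\operatorname{Im}(z_1+\cdots+z_k)$: precisely one cyclic rotation starts the walk at its minimum, and skewness is exactly what makes that minimum unique (ruling out both $s_M=0$ and $s_M$ collinear with $s_n$ for proper nonempty $M$). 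This is the deterministic counterpart of the argument the paper does spell out for Proposition~\ref{prop:uniform_law_random_bridges}, so your proof has the additional merit of unifying the paper's two fluctuation inputs. Two harmless conventions you rely on: you read $\Cyc(n)$ as the set of the $n$ cyclic rotations $\pi_0,\dots,\pi_{n-1}$ rather than all $(n-1)!$ single-cycle permutations --- this is the reading required by the lemma's count and by its application in the proof of Theorem~\ref{thm:occupation_time_levy_bridge}, even though the paper's earlier definition of $\Cyc$ is broader; and your uses of skewness quantify the defining implication over all subsets including $M'=\emptyset$ (so that $s_\emptyset=0$ witnesses $s_M\neq 0$ for nonempty $M$), which is consistent with how the paper phrases the definition.
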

We note that Baxter's lemma is rather elementary to prove, the proof is a clever few line computation. The way in which we will apply Baxter's lemma is the following. If the partial sums $(s_k)_{k=1}^n$ induced by skew points $z_1, \ldots, z_n\in\mathbb R^2$ are such that $s_n$ lies on the positive $x$-axis, then $H(s_n)$ is the upper half-plane. Thus Baxter's lemma is well suited to approach persistence probabilities from a combinatorial perspective. 

Moreover, we make the following observation. Consider a real function $f:\mathbb R\to\mathbb R$ such that $f(0)=0$ and fix $0=t_0<t_1<\cdots<t_n<t_{n+1}=1$. Define the function $\mathring{f}\colon [0, 1]\to\mathbb R$ by setting $\mathring{f}(t)\coloneqq f(t)-tf(1)$, and call $\mathring{f}$ the bridge induced by $f$. 

\begin{lemma}
\label{lem:baxter_corollary}
We have $\mathring{f}(t_k)>0$ for all $1\leq k\leq n$ if and only if the points $(t_k, f(t_k))$, $1\leq k\leq n+1$, all lie in the left half plane induced by $(1, f(1))$ (which is $H((1,f(1))$).
\end{lemma}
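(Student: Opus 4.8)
\textbf{Proof plan for Lemma~\ref{lem:baxter_corollary}.}

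The plan is to translate the condition $\mathring f(t_k)>0$ into a purely geometric statement about the planar points $z_k := (t_k, f(t_k))$ and then recognise the resulting half-plane as exactly $H((1,f(1)))$. First I would unwind the definition: $\mathring f(t_k) = f(t_k) - t_k f(1)$, so $\mathring f(t_k)>0$ is equivalent to $f(t_k) > t_k f(1)$, i.e.\ to the point $(t_k,f(t_k))$ lying strictly above the line through the origin and $(1,f(1))$ (the line $\{(t, t f(1)) : t\in\mathbb R\}$, which passes through $0$ since $f(0)=0$). The endpoint $(t_{n+1},f(t_{n+1})) = (1,f(1))$ lies on that line, hence trivially in the closed half-plane $H((1,f(1)))$, which is why the lemma includes $k=n+1$ among the indices whose points lie in the (closed) left half plane.

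The one genuine point to pin down is the orientation convention: one must check that the left half plane induced by $s_n = (1,f(1))$, in the clockwise orientation fixed before Lemma~\ref{lem:baxters_lemma} (direction of the line induced by travelling from $0$ to $(1,f(1))$), is precisely the set of points strictly above the line $y = f(1) x$, together with the line itself (the closure being included in the definition of $H$). Concretely, a point $(x,y)$ lies strictly above this line iff $y - f(1) x > 0$; and the signed quantity $y - f(1) x$ is (up to the positive scalar $1 = $ the $x$-coordinate of $s_n$) the two-dimensional cross product $s_n \times (x,y) = 1\cdot y - f(1)\cdot x$, which is positive exactly on the left side of the ray from $0$ through $s_n$ under the stated clockwise convention. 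So ``$(x,y)\in H((1,f(1)))$'' is literally ``$y \ge f(1) x$'', and ``$(x,y)$ strictly in the interior'' is ``$y > f(1) x$''.

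Putting these together: $\mathring f(t_k) > 0$ for all $1\le k\le n$ is equivalent to $f(t_k) > t_k f(1)$ for all $1\le k\le n$, which is equivalent to $z_k = (t_k,f(t_k))$ lying strictly inside the left half plane induced by $(1,f(1))$ for $1\le k\le n$; and since $z_{n+1} = (1,f(1))$ always lies on the boundary of that half plane, this is equivalent to ``$z_1,\dots,z_{n+1}$ all lie in $H((1,f(1)))$'' (with the two endpoints $z_0=(0,0)$ and $z_{n+1}=(1,f(1))$ automatically on the bounding line). This is exactly the claimed equivalence. The main obstacle is purely bookkeeping: making sure the interior/boundary distinction and the clockwise orientation are stated consistently with the conventions set up for Baxter's lemma, so that the strict inequalities $\mathring f(t_k)>0$ match the strict membership for $k\le n$ while the weak (closed) half-plane $H$ accommodates the endpoints; no inequality or limiting argument is needed.
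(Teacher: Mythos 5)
Your proposal is correct and follows essentially the same route as the paper, whose entire proof consists of parameterising the line through the origin and $(1,f(1))$ as $\{(t,tf(1))\colon t\in\mathbb R\}$ and substituting $t=t_k$; you simply spell out the orientation bookkeeping (that the closed left half plane $H((1,f(1)))$ is $\{(x,y)\colon y\ge f(1)x\}$) and the boundary status of the endpoint $k=n+1$, which the paper leaves implicit. No further comparison is needed.
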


\begin{proof} The line through the origin containing $(1, f(1))$ may be parameterised as $\{ (t, t f(1))\colon t\in\mathbb R  \}$. Inserting the arguments $t_k$ gives the claim. 
\end{proof}

We are now ready to prove Theorem~\ref{thm:occupation_time_levy_bridge}. Proposition~\ref{prop:sampling} reduces the problem to persistence probabilities of L\'evy bridges, which is then reformulated with the help of Lemma~\ref{lem:baxter_corollary}. Baxter's lemma then simplifies the expressions to the moments of the uniform distribution.
\begin{proof}[Proof of Theorem \ref{thm:occupation_time_levy_bridge}]
Recall that the uniform distribution on $(0, 1)$ is uniquely identified by its moment sequence $\int_0^1 x^m dx = \frac{1}{m+1}$, $m\geq 1$. By Proposition~\ref{prop:sampling} it suffices to show that, for any $m\in\mathbb N$,
  \begin{align*} 
      \Prob{\mathring X_{U_{m:1}}>0, \ldots, \mathring X_{U_{m:m}}>0} = \frac{1}{m+1},
  \end{align*}
  where $U_1, U_2, \ldots$ is an i.i.d.\ sequence of uniform $(0, 1)$ random variables independent of $(X_t)$ and $U_{m:1}\leq \ldots\leq U_{m:m}$ is the corresponding order statistics. We further set $U_{m:0}:=0$ and $U_{m:m+1}:=1$. 
  
  \textit{Step 1:} We first show that the random vector $(X_{U_{m:k}}- X_{U_{m:k-1}})_{k=1}^{m+1}$ is exchangeable. For this it suffices to show that for any permutation $\pi$ of $\{1, \ldots, m+1\}$ and for any $t_1, \ldots, t_{m+1}\in\mathbb R$
  \begin{align} \label{eqn:levyincone2}
    \Ex\left [\exp\left (i\sum_{k=1}^{m+1} t_k(X_{U_{m:k}}-X_{U_{m:k-1}})\right )\right ] &= \Ex\left [\exp\left (i\sum_{k=1}^{m+1} t_{k}(X_{U_{m:\pi(k)}}-X_{U_{m:\pi(k)-1}})\right )\right ].
  \end{align}
Let $B\coloneqq \{ x\in\mathbb R\colon \inorm x\leq 1\}$ denote the unit ball in $\mathbb R$, and let $(a, \gamma, \nu)$ denote the generating triplet of the law of $X_1$, where $a\geq 0$, $\gamma\in\mathbb R$, and $\nu$ is a measure on $\mathbb R$ with $\nu(\{ 0\})=0$ and $\int_{\mathbb R} (\abs x\wedge 1)\nu(dx)<\infty$. Conditionally given $U\coloneqq (U_1, \ldots, U_{m})$, and using the fact that $X$ has independent increments, we have
\begin{align}
    &\quad \Exc{ \exp\left (i\sum_{k=1}^{m+1} t_k(X_{U_{m:k}}-X_{U_{m:k-1}})\right )}{U}
    \notag 
    \\
    &= \prod_{k=1}^{m+1} \Exc{\exp(i t_k(X_{U_{m:k}}-X_{U_{m:k-1}}))}{U}
    \notag
    \\
    &= \prod_{k=1}^{m+1} \exp\left ((U_{m:k}-U_{m:k-1}) \left (-\frac 1 2 t_k^2 a + i\gamma t_k + \int_\mathbb R (e^{it_k x}-1-izx\mathbf 1_{B}(x)) \nu(dx) \right )\right ), \label{eqn:levybridgetwo}
\end{align}
where in the last step we used the well-known L\'evy-Khinchine representation of an infinitely divisible distribution, cf.~\cite[Theorem 8.1]{Sato1999}. Using the fact that the $m+1$ gaps $(U_{m:1}-U_{m:0}, U_{m:2}-U_{m:1}, \ldots, U_{m:m+1}-U_{m:m})$ induced by $U_1, \ldots, U_m$ obey a Dirichlet distribution with parameters $1, \ldots, 1$, and thus constitute an exchangeable random vector, we obtain from (\ref{eqn:levybridgetwo}) that
\begin{align*}
    & \Ex\,\Exc{ \exp\left (i\sum_{k=1}^{m+1} t_k(X_{U_{m:k}}-X_{U_{m:k-1}})\right )}{U} \\
    &= \Ex\,\Exc{ \exp\left (i\sum_{k=1}^{m+1} t_k(X_{U_{m:\pi(k)}}-X_{U_{m:\pi(k)-1}})\right )}{U}.
\end{align*}
By Fubini's theorem, this shows (\ref{eqn:levyincone2}). We can now come to the main argument of the proof.

\textit{Step 2:} Set
$$
S_k:=\left(U_{m:k},X_{U_{m:k}}\right) = \sum_{i=1}^k \left( U_{m:i}-U_{m:i-1}, X_{U_{m:i}}-X_{U_{m:i-1}}\right),\quad k=0,\ldots,m+1.
$$
Note that the events $\{ (U_{m:1}, X_{U_{m:1}}), \ldots, (U_{m:m}, X_{U_{m:m}}) \in H((1,X_1)) \} = \{ S_1, \ldots, S_m\in H((1,X_1)) \}$ and $\{ \mathring X_{U_{m:1}}>0, \ldots,  \mathring X_{U_{m:1}}>0 \}$ are equal by Lemma~\ref{lem:baxter_corollary}. Using the cyclic exchangeability in this first step (note that $X_1$ is not altered by the permutations), we obtain
\begin{align*}
    \Prob{S_1, \ldots, S_m\in H((1,X_1)) } &= \frac{1}{m+1} \sum_{\pi\in\Cyc(m+1)} \Prob{S_1[\pi], \ldots, S_m[\pi]\in H((1,X_1)) }\\
      &= \Ex\Big [  \frac{1}{m+1} \sum_{\pi\in\Cyc(m+1)} \mathbf 1\{S_1[\pi], \ldots, S_m[\pi]\in H((1,X_1)) \} \Big ]\\
      &= \frac{1}{m+1},
\end{align*}
where in the second to last line the sum equals one a.s.~by Baxter's combinatorial lemma. Here, we used that the points $(U_{m:k}-U_{m:k-1},X_{U_{m:k}}-X_{m:k-1})$ are almost surely skew in the application of Baxter's combinatorial lemma, which is due to the assumption that $X_1$ has no atoms.\end{proof}

\begin{remark}
    Note that the main argument applies to all stochastic processes whose increments over gaps induced by i.i.d.\ sampled times are exchangeable.
\end{remark}

\bibliographystyle{amsplain}
\bibliography{literature}

\providecommand{\bysame}{\leavevmode\hbox to3em{\hrulefill}\thinspace}
\providecommand{\MR}{\relax\ifhmode\unskip\space\fi MR }
\providecommand{\MRhref}[2]{%
  \href{http://www.ams.org/mathscinet-getitem?mr=#1}{#2}
}
\providecommand{\href}[2]{#2}
\begin{thebibliography}{10}

\bibitem{Aldous1993}
D.~Aldous, \emph{The continuum random tree. {III}}, Ann. Probab. \textbf{21}
  (1993), no.~1, 248--289.

\bibitem{Baxter1961}
G.~Baxter, \emph{A combinatorial lemma for complex numbers}, Ann. Math.
  Statist. \textbf{32} (1961), 901--904. \MR{126290}

\bibitem{Berger2010}
M.~Berger, \emph{Geometry revealed}, Springer, Heidelberg, 2010, A Jacob's
  ladder to modern higher geometry, Translated from the French by Lester
  Senechal. \MR{2724440}

\bibitem{BergerBethencourt2023preprint}
Q.~Berger and L.~B{\'e}thencourt, \emph{{An application of Sparre Andersen's
  fluctuation theorem for exchangeable and sign-invariant random variables}},
  {\it Séminaire de Probabilités}, to appear, 2023+.

\bibitem{Biane1991}
P.~Biane, \emph{Quantum random walk on the dual of {SU}{{\((n)\)}}}, Probab.
  Theory Relat. Fields \textbf{89} (1991), no.~1, 117--129.

\bibitem{Biane1992}
\bysame, \emph{Minuscule weights and random walks on lattices}, Quantum
  probability \& related topics, QP-PQ, vol. VII, World Sci. Publ., River Edge,
  NJ, 1992, pp.~51--65. \MR{1186654}

\bibitem{Bingham1971}
N.~H. Bingham, \emph{Limit theorems for occupation times of {M}arkov
  processes}, Z. Wahrscheinlichkeitstheorie und Verw. Gebiete \textbf{17}
  (1971), 1--22. \MR{281255}

\bibitem{BinghamDoney1988}
N.~H. Bingham and R.~A. Doney, \emph{On higher-dimensional analogues of the
  arc-sine law}, J. Appl. Probab. \textbf{25} (1988), no.~1, 120--131.
  \MR{929510}

\bibitem{Blumenson1960}
L.~E. Blumenson, \emph{Classroom {N}otes: {A} {D}erivation of
  {$n$}-{D}imensional {S}pherical {C}oordinates}, Amer. Math. Monthly
  \textbf{67} (1960), no.~1, 63--66. \MR{1530579}

\bibitem{Bousquet-MellouMishna2010}
M.~Bousquet-M{\'e}lou and M.~Mishna, \emph{Walks with small steps in the
  quarter plane}, Algorithmic probability and combinatorics. Papers from the
  AMS special sessions, Chicago, IL, USA, October 5--6, 2007 and Vancouver, BC,
  Canada, October 4--5, 2008, Providence, RI: American Mathematical Society
  (AMS), 2010, pp.~1--39.

\bibitem{DarlingKac1957}
D.~A. Darling and M.~Kac, \emph{On occupation times for {M}arkoff processes},
  Trans. Amer. Math. Soc. \textbf{84} (1957), 444--458. \MR{84222}

\bibitem{dembodinggao2013}
A.~Dembo, J.~Ding, and F.~Gao, \emph{Persistence of iterated partial sums},
  Ann. Inst. Henri Poincar\'{e} Probab. Stat. \textbf{49} (2013), no.~3,
  873--884. \MR{3112437}

\bibitem{DenisovWachtel2010}
D.~Denisov and V.~Wachtel, \emph{Conditional limit theorems for ordered random
  walks}, Electron. J. Probab. \textbf{15} (2010), 292--322, Id/No 11.

\bibitem{Desbois2007}
J.~Desbois, \emph{Occupation times for planar and higher dimensional {Brownian}
  motion}, J. Phys. A, Math. Theor. \textbf{40} (2007), no.~10, 2251--2262.

\bibitem{Devroye1986}
L.~Devroye, \emph{Nonuniform random variate generation}, Springer-Verlag, New
  York, 1986. \MR{836973}

\bibitem{Dyson1962}
F.~J. Dyson, \emph{A {Brownian}-motion model for the eigenvalues of a random
  matrix}, J. Math. Phys. \textbf{3} (1962), 1191--1198.

\bibitem{EichelsbacherKoenig2008}
P.~Eichelsbacher and W.~K{\"o}nig, \emph{Ordered random walks}, Electron. J.
  Probab. \textbf{13} (2008), 1307--1336.

\bibitem{Eldan2014}
R.~Eldan, \emph{Volumetric properties of the convex hull of an
  {{\(n\)}}-dimensional {Brownian} motion}, Electron. J. Probab. \textbf{19}
  (2014), 34, Id/No 45.

\bibitem{ErnstShepp2017}
P.~A. Ernst and L.~Shepp, \emph{On occupation times of the first and third
  quadrants for planar {B}rownian motion}, J. Appl. Probab. \textbf{54} (2017),
  no.~1, 337--342. \MR{3632623}

\bibitem{FayolleRaschel2012}
G.~Fayolle and K.~Raschel, \emph{Some exact asymptotics in the counting of
  walks in the quarter plane}, Proceeding of the 23rd international meeting on
  probabilistic, combinatorial, and asymptotic methods in the analysis of
  algorithms (AofA'12), Montreal, Canada, June 18--22, 2012, Nancy: The
  Association. Discrete Mathematics \& Theoretical Computer Science (DMTCS),
  2012, pp.~109--124.

\bibitem{FellerVol1}
W.~Feller, \emph{An introduction to probability theory and its applications.
  {V}ol. {I}}, third ed., John Wiley \& Sons, Inc., New York-London-Sydney,
  1968. \MR{228020}

\bibitem{FitzsimmonsGetoor1995}
P.~J. Fitzsimmons and R.~K. Getoor, \emph{Occupation time distributions for
  {L}\'{e}vy bridges and excursions}, Stochastic Process. Appl. \textbf{58}
  (1995), no.~1, 73--89. \MR{1341555}

\bibitem{GarbitRaschel2016}
R.~Garbit and K.~Raschel, \emph{On the exit time from a cone for random walks
  with drift}, Rev. Mat. Iberoam. \textbf{32} (2016), no.~2, 511--532.
  \MR{3512425}

\bibitem{GetoorSharpe1994}
R.~K. Getoor and M.~J. Sharpe, \emph{On the arc-sine laws for {L}\'{e}vy
  processes}, J. Appl. Probab. \textbf{31} (1994), no.~1, 76--89. \MR{1260572}

\bibitem{Hashorva}
E.~Hashorva, \emph{Boundary non-crossings of {B}rownian pillow}, J. Theoret.
  Probab. \textbf{23} (2010), no.~1, 193--208. \MR{2591910}

\bibitem{Istas2005}
J.~Istas, \emph{Spherical and hyperbolic fractional {B}rownian motion},
  Electron. Comm. Probab. \textbf{10} (2005), 254--262. \MR{2198600}

\bibitem{JohnsonMishnaYeats2018}
S.~Johnson, M.~Mishna, and K.~Yeats, \emph{A combinatorial understanding of
  lattice path asymptotics}, Adv. Appl. Math. \textbf{92} (2018), 144--163.

\bibitem{KabluchkoVysotskyZaporozhets2019}
Z.~Kabluchko, V.~Vysotsky, and D.~Zaporozhets, \emph{A multidimensional
  analogue of the arcsine law for the number of positive terms in a random
  walk}, Bernoulli \textbf{25} (2019), no.~1, 521--548. \MR{3892328}

\bibitem{Kac1951}
M.~Kac, \emph{On some connections between probability theory and differential
  and integral equations}, Proceedings of the {S}econd {B}erkeley {S}ymposium
  on {M}athematical {S}tatistics and {P}robability, 1950, Univ. California
  Press, Berkeley-Los Angeles, Calif., 1951, pp.~189--215. \MR{45333}

\bibitem{KallianpurRobbins1953}
G.~Kallianpur and H.~Robbins, \emph{Ergodic property of the {B}rownian motion
  process}, Proc. Nat. Acad. Sci. U.S.A. \textbf{39} (1953), 525--533.
  \MR{56233}

\bibitem{KhoshnevisanPemantle2004}
D.~Khoshnevisan and R.~Pemantle, \emph{Sojourn times of {B}rownian sheet},
  Period. Math. Hungar. \textbf{41} (2000), no.~1-2, 187--194. \MR{1812805}

\bibitem{Knight1996}
F.~B. Knight, \emph{The uniform law for exchangeable and {L}\'{e}vy process
  bridges}, Ast\'{e}risque (1996), no.~236, 171--188, Hommage \`a P. A. Meyer
  et J. Neveu. \MR{1417982}

\bibitem{LeGall1993}
J.-F. Le~Gall, \emph{The uniform random tree in a {Brownian} excursion},
  Probab. Theory Relat. Fields \textbf{96} (1993), no.~3, 369--383.

\bibitem{Levy1939}
P.~L\'{e}vy, \emph{Sur certains processus stochastiques homog\`enes},
  Compositio Math. \textbf{7} (1939), 283--339. \MR{919}

\bibitem{Levy1965}
\bysame, \emph{Processus {S}tochastiques et {M}ouvement {B}rownien},
  Gauthier-Villars \& Cie, Paris, 1965, Suivi d'une note de M. Lo\`eve,
  Deuxi\`eme \'{e}dition revue et augment\'{e}e. \MR{190953}

\bibitem{MeyreWerner1995}
T.~Meyre and W.~Werner, \emph{On the occupation times of cones by {Brownian}
  motion}, Probab. Theory Relat. Fields \textbf{101} (1995), no.~3, 409--419.

\bibitem{MoertersPeres2010}
P.~M\"{o}rters and Y.~Peres, \emph{Brownian motion}, Cambridge Series in
  Statistical and Probabilistic Mathematics, vol.~30, Cambridge University
  Press, Cambridge, 2010, With an appendix by Oded Schramm and Wendelin Werner.
  \MR{2604525}

\bibitem{Mountford1990}
T.~S. Mountford, \emph{Limiting behaviour of the occupation of wedges by
  complex {Brownian} motion}, Probab. Theory Relat. Fields \textbf{84} (1990),
  no.~1, 55--65.

\bibitem{Pitman1999}
J.~Pitman, \emph{Brownian motion, bridge excursion, and meander characterized
  by sampling at independent uniform times}, Electron. J. Probab. \textbf{4}
  (1999), 33, Id/No 11.

\bibitem{Pitman2006}
\bysame, \emph{Combinatorial stochastic processes}, Lecture Notes in
  Mathematics, vol. 1875, Springer-Verlag, Berlin, 2006, Lectures from the 32nd
  Summer School on Probability Theory held in Saint-Flour, July 7--24, 2002,
  With a foreword by Jean Picard. \MR{2245368}

\bibitem{Sato1999}
K.~Sato, \emph{L\'{e}vy processes and infinitely divisible distributions},
  Cambridge Studies in Advanced Mathematics, vol.~68, Cambridge University
  Press, Cambridge, 1999, Translated from the 1990 Japanese original, Revised
  by the author. \MR{1739520}

\bibitem{Andersen1953b}
E.~Sparre~Andersen, \emph{On the fluctuations of sums of random variables},
  Math. Scand. \textbf{1} (1953), 263--285. \MR{58893}

\bibitem{Andersen1954}
\bysame, \emph{On the fluctuations of sums of random variables. {II}}, Math.
  Scand. \textbf{2} (1954), 195--223.

\bibitem{Spitzer1956}
F.~Spitzer, \emph{A combinatorial lemma and its application to probability
  theory}, Trans. Amer. Math. Soc. \textbf{82} (1956), 323--339. \MR{79851}

\end{thebibliography}

\end{document}